\newcommand{\norm}[1] {\| #1 \| }
\theoremstyle{theorem} \newtheorem{theorem}{Theorem}
\newtheorem{proposition}{Proposition} [section]
\newtheorem{lemma}[proposition]{Lemma}
\newtheorem{definition}[proposition]{Definition}
\newtheorem{conjecture}[proposition]{Conjecture}
\theoremstyle{definition} 
\theoremstyle{remark} 
\theoremstyle{remark} \newtheorem{remark}[proposition]{Remark}
\theoremstyle{definition} 
\theoremstyle{definition}
\newcommand{\quotientd}[2]{{\left.\raisebox{.2em}{$#1$}\middle\slash\raisebox{-.2em}{$#2$}\right.}}
\def\det{\mathop{\rm det}\nolimits}
\newcounter{par}[subsection]
\newcounter{parsub}[subsection]
	\title[Hyperbolicity of generic hypersurfaces]{Hyperbolicity of generic hypersurfaces of polynomial degree via Green-Griffiths jet differentials}
\author[B. Cadorel]{Beno\^{i}t Cadorel} 
\email{benoit.cadorel@univ-lorraine.fr}
\address{Institut \'Elie Cartan de Lorraine, Universit\'e de Lorraine, F-54000 Nancy,
	France.}
\urladdr{http://www.normalesup.org/~bcadorel/} 
\begin{document}
\maketitle

\begin{abstract}
	We give a new version of a recent result of B\'{e}rczi-Kirwan, proving the Kobayashi and Green-Griffiths-Lang conjectures for generic hypersurfaces in \(\mathbb{P}^{n+1}\), with a polynomial lower bound on the degree. Our strategy again relies on Siu's technique of slanted vector fields and the use of holomorphic Morse inequalities to prove the existence of a jet differential equation with a negative twist -- however, instead of using a space of invariant jet differentials, we base our computations on the classical Green-Griffiths jet spaces.
\end{abstract}

\section{Introduction}

The Green-Griffiths-Lang conjecture \cite{GG80,Lan87} predicts that any projective manifold of general type \(X\) should be {\em quasi-Brody hyperbolic}, namely there should exist a proper algebraic subset \(Z \subsetneq X\) containing the image of any non-constant holomorphic map \(f : \mathbb{C} \longrightarrow X\). Studying this question in the case of hypersurfaces is already a very difficult problem -- in this situation, Kobayashi also conjectured that one should be able to obtain genuine Brody hyperbolicity for higher degrees and {\em generic} hypersurfaces:

\begin{conjecture} \label{conj:GGL} Let \(n \geq 2\) be an integer.
	\begin{enumerate}
		\item {\em (Green-Griffiths-Lang conjecture for smooth hypersurfaces)} Any smooth hypersurface \(X \subset \mathbb{P}^{n+1}\) of degree \(d \geq n + 3\) is quasi-Brody hyperbolic;
		\item {\em (Kobayashi conjecture \cite{kob70})} 
			\footnote{The bound in the second item did not appear in Kobayashi's original article: it would follow naturally from results by Clemens-Ein-Voisin-Pacienza (see \cite{cle86, ein88, ein91, voi96, voi98, pac04}) -- at least for a very general hypersurface -- if the Green-Griffiths-Lang conjecture were known to hold in full generality.}
			A {\em generic} hypersurface \(X \subset \mathbb{P}^{n+1}\) of degree \(d \geq 2n\) (or \(d \geq 2n+1\) if \(n \leq 4\)) is {\em Brody hyperbolic} i.e. there exists no non-constant holomorphic map \(f : \mathbb{C} \to X\).
	\end{enumerate}
\end{conjecture}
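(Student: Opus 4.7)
\medskip

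Since the conjecture in its sharp form remains open, my plan is only to aim for a weaker version: show that a generic hypersurface \(X \subset \mathbb{P}^{n+1}\) of degree at least some polynomial \(P(n)\) in \(n\) is Brody hyperbolic (and in fact that any smooth such \(X\) of polynomial degree is quasi-Brody hyperbolic). The overall architecture is the one pioneered by Siu and refined by Demailly, Diverio, Merker, Rousseau and others, but carried out with the classical Green-Griffiths jet bundles \(E_{k,m}^{GG} T^*_X\) rather than with Demailly's invariant jet bundles. The proof splits into three standard blocks: (i) existence of a global jet differential with negative twist on a universal smooth hypersurface; (ii) propagation of this jet differential across the family using Siu's slanted vector fields; (iii) algebraic degeneracy of entire curves, followed by a descent on the exceptional locus.

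For step (i), I would fix a smooth hypersurface \(X \subset \mathbb{P}^{n+1}\) of degree \(d\) and a jet order \(k\), and consider the Green-Griffiths directed manifold \((J_k X, V_k)\). Using Demailly's holomorphic Morse inequalities applied to the virtual line bundle \(\cO_{J_k X}(m) \otimes \pi^* \cO_X(-\delta m d)\) for suitable \(\delta > 0\), the task reduces to making a top self-intersection number on \(J_k X\) positive. The essential computation is to show that the positive part of this Morse integral, expressed via Schur polynomials in the Chern classes of \(T^*_X\) and of the tautological quotients on the Green-Griffiths tower, dominates the negative part as soon as \(d\) grows polynomially in \(n\) (for an appropriate choice of \(k \sim n\)). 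This is exactly the step where the switch from invariant jets to Green-Griffiths jets makes the combinatorics more transparent, and where most of the technical work sits.

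For step (ii), I would move to the universal family \(\calX \to U\) of smooth hypersurfaces of degree \(d\) in \(\mathbb{P}^{n+1}\) and transfer the jet differential obtained above into a twisted global section on the relative jet space \(J_k \calX_{/U}\). Following Siu, I would then construct a sufficiently rich family of meromorphic vector fields on \(J_k \calX_{/U}\) with controlled pole order along the universal hypersurface, using that \(T_{\mathbb{P}^{n+1}}(-1)\) is globally generated and that the polynomial coefficients of the defining equation span enough directions. Differentiating the jet differential with these slanted vector fields and then restricting to a generic member produces, by a standard Nakamaye--Siu-type argument, a jet differential of controlled order on \(X\) itself that vanishes along an ample divisor and that any entire curve \(f : \mathbb{C} \to X\) must satisfy identically.

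For step (iii), the satisfied jet differential equation forces the image of \(f\) into a proper subvariety \(Z \subsetneq X\), giving quasi-Brody hyperbolicity; for the genuine Kobayashi statement, I would iterate the slanted differentiation (or invoke an orbifold/hyperbolicity induction on \(Z\)) to rule out non-constant entire curves into the exceptional locus, at the price of inflating the degree bound by a polynomial factor. The main obstacle I expect is step (i): keeping the Morse-theoretic estimate quantitative enough to yield a polynomial, rather than merely exponential, dependence of \(d\) on \(n\), because the Green-Griffiths jet bundles are well known to be less efficient than the invariant ones, and recovering the Bérczi-Kirwan polynomial bound will require a careful choice of the weights in the Green-Griffiths filtration and sharp asymptotics for the Schur polynomials that appear in the top intersection number.
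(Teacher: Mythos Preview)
The statement you are addressing is a \emph{conjecture}; the paper does not prove it and does not claim to. What the paper actually establishes is the weaker Theorem~\ref{thm:poly} (polynomial degree bounds), and you have correctly redirected your proposal toward that. Your overall architecture---Green-Griffiths jets, holomorphic Morse inequalities for existence of a negatively twisted jet differential, then Siu's slanted vector fields---matches the paper's strategy (Theorem~\ref{thm:hypslanted} plus Proposition~\ref{prop:mainresult}).

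There are, however, three places where your plan diverges from what the paper actually does, and two of them matter. First, for the intersection-theoretic computation in step~(i) you propose Schur polynomials in the Chern classes of \(T_X^*\); the paper instead works on the weighted projective bundle \(P_k=\mathbb{P}(\mathbf{\Omega}_k)\) and computes via \emph{weighted Segre classes} and the Whitney formula~\eqref{eq:whitney}. This is not merely cosmetic: the weighted-bundle framework is what makes the combinatorics tractable and yields the explicit coefficients \(B_\gamma, C_\alpha, D_l\) that feed into the Fujiwara bound. Second, your closing worry that Green-Griffiths jets are ``less efficient than the invariant ones'' and that recovering a polynomial bound will be delicate is exactly backwards. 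The paper's point (see Proposition~\ref{prop:nef} and the remark following it) is that on \(X_k^{GG}\) one can twist by \(\mathcal{O}_X(2)\) \emph{independently of \(k\)} to get a nef class, whereas on the Demailly--Semple tower the required twist grows exponentially in \(k\); this constant twist is precisely why the Morse inequalities become easy to satisfy and the bound comes out polynomial. Third, for item~(2) you propose to iterate the slanted differentiation or run an induction on the exceptional locus; the paper does neither and instead invokes Riedl--Yang \cite{RY22}, which converts any jet-differential proof of quasi-hyperbolicity with bound \(d_n\) directly into Kobayashi hyperbolicity with bound \(d_{2n-1}\). Your proposed iteration would work in principle but is more laborious and gives worse control on the degree.
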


The previous conjecture has attracted a lot of attention in the last few years, and we now know that the two items hold if we consider {\em generic} hypersurfaces of high enough degree:

\begin{theorem} \label{thm:DMR} There exists two sequences of integers \(d_{n}, d_{n}'\) such that the following hold:
	\begin{enumerate}
		\item {\em (Diverio-Merker-Rousseau \cite{DMR10})} a generic hypersurface \(X \subset \mathbb{P}^{n+1}\) of degree \(d \geq d_{n}\) is quasi-Brody hyperbolic;
		\item  {\em (Brotbek \cite{Bro15})} a generic hypersurface \(X \subset \mathbb{P}^{n+1}\) of degree \(d \geq d_{n}'\) is Brody hyperbolic.
	\end{enumerate}
\end{theorem}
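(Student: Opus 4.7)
Both items follow a common paradigm, which we now outline; the distinction between quasi-Brody and genuine Brody hyperbolicity will emerge at the final step. The core idea is to produce, on a generic hypersurface \(X \subset \mathbb{P}^{n+1}\) of high enough degree, global sections of a bundle of jet differentials twisted by a negative power of an ample line bundle, and then to exploit Siu's slanted vector fields in the universal family to force any entire curve \(f : \mathbb{C} \to X\) into a proper algebraic subset (resp.\ to rule out its existence entirely). Recall that by the Fundamental Vanishing Theorem of Green-Griffiths and Demailly, any global section \(P \in H^{0}(X, E_{k,m} \otimes A^{-1})\) with \(A\) ample, where \(E_{k,m}\) is either the Green-Griffiths bundle \(E_{k,m}^{GG}\) or the Demailly-Semple invariant subbundle, satisfies \(P(j_{k}f) \equiv 0\) for every entire \(f\).

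The first step is the production of such non-trivial jet differentials. I would apply algebraic holomorphic Morse inequalities to a suitable virtual line bundle on the Demailly-Semple tower \(X_{k} \to X\) (for item (1)) or to a bundle of Green-Griffiths differentials (for item (2)). One relates the intersection numbers appearing in the Morse inequalities to Segre-class computations on the jet bundle, and then uses the restriction of the cotangent sequence of the ambient \(\mathbb{P}^{n+1}\) to express the leading term as a polynomial in \(d\). The outcome is that, for \(k\) of the order of \(n\) and \(d\) sufficiently large, there exists \(a > 0\) and a non-zero section in \(H^{0}(X, E_{k,m} \otimes \mathcal{O}_{X}(-am))\) for some large \(m\).

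The second step is Siu's technique of slanted vector fields. Consider the universal hypersurface \(\mathcal{X} \to U\) and its vertical jet bundle \(J_{k}(\mathcal{X}/U)\). One constructs global holomorphic vector fields on \(J_{k}(\mathcal{X}/U)\) with poles of controlled order in the \(\mathbb{P}^{n+1}\)-direction, and uses them to differentiate any global jet differential \(P\) on a fixed \(X\). Iterating this differentiation yields a large family of new jet differentials whose joint zero locus is cut down considerably. For item (1), one shows that the joint base locus is a proper subvariety \(Z \subsetneq X\), which by the vanishing theorem must contain the image of every entire curve, proving quasi-Brody hyperbolicity. For item (2), Brotbek's refinement produces sufficiently many independent differentials (constructed via Wronskians attached to a well-chosen covering family of Fermat-type hypersurfaces, combined with a specialization/deformation argument) so that the joint zero locus is \emph{empty}, ruling out entire curves altogether.

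The main obstacle, and the place where the integers \(d_{n}\) and \(d_{n}'\) become large, is the Morse-inequalities computation: one must control the top Chern-class intersection on a jet tower of dimension growing with \(k\), and the combinatorics of the resulting polynomial in \(d\) force a threshold that in both \cite{DMR10} and \cite{Bro15} is much worse than polynomial in \(n\). Siu's vector-field method is comparatively robust, but it also imposes its own constraint: the order of the poles of the slanted vector fields dictates how negative a twist can be absorbed, and therefore feeds back into the degree bound. Overcoming this double constraint to obtain a polynomial bound on \(d\), as announced in the abstract, is precisely the contribution of the present paper.
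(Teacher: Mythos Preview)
This theorem is not proved in the paper: it is stated as background, with proofs attributed to \cite{DMR10} and \cite{Bro15}, and the paper's own contribution is the polynomial bound of Theorem~\ref{thm:poly}. There is therefore no ``paper's proof'' to compare your proposal against.

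That said, your sketch of the cited literature is broadly reasonable for item~(1), but muddled for item~(2). You propose to prove both items via holomorphic Morse inequalities (on the Demailly--Semple tower for~(1), on Green--Griffiths jets for~(2)) followed by slanted vector fields. This is indeed the architecture of \cite{DMR10}. However, Brotbek's proof of~(2) does \emph{not} proceed via Morse inequalities: it constructs explicit Wronskian jet differentials on suitable Fermat-type hypersurfaces and uses a deformation/semicontinuity argument to transport the resulting ampleness statement to the generic hypersurface. You do mention Wronskians and deformation later, but your ``first step'' paragraph wrongly folds item~(2) into the Morse-inequalities framework. The slanted vector fields of Siu are also not the mechanism in Brotbek's argument; his method produces enough jet differentials directly, without differentiating along the parameter space. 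If you want a route to~(2) that does pass through Morse inequalities and slanted vector fields, that is the Riedl--Yang reduction \cite{RY22} (used in the present paper and in \cite{BK24}): once~(1) is established with bound \(d_n\), one gets~(2) with \(d_n' = d_{2n-1}\). But that is not how \cite{Bro15} itself proceeds.
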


Obtaining effective bounds for the sequences \(d_{n}, d_{n}'\) is no easy task, and we are still very far from the linear bounds of Conjecture~\ref{conj:GGL}. However, a significant breakthrough has been made recently by B\'{e}rczi and Kirwan \cite{BK24}, who managed to obtain polynomial bounds \(d_{n} \approx d_{n}' \approx O(n^{4})\) in both items -- thus substantially improving the previously known bounds, that were all growing at least as \(e^{O(n\log n)}\) (see e.g. \cite{Den20, Dem20, MT19}).
\medskip

The strategy of \cite{BK24} relies on the technique of {\em slanted vector fields} introduced by Siu \cite{siu04}. Eventually, everything boils down to proving the bigness of a well-chosen line bundle on an projective {\em jet space} \(X_{k} \to X\) sitting above \(X\) (see Section~\ref{sec:strategyslanted} below). The classical strategy to prove this bigness is to apply Siu's {\em algebraic Morse inequalities}, that requires in turn to show the positivity of an adequate intersection number.

There are several possible choices for the jet space \(X_{k}\), but not all seem to give very satisfactory bounds on \(d_{n}\) or \(d_{n}'\): most of the previous exponential bounds were obtained for example using the Demailly-Semple jets spaces \(X_{k} = X_{k}^{DS}\). The novelty in \cite{BK24} was to introduce a jet space \(X_{k} = X_{k}^{BK}\) on which the intersection theory is much more favorable, by means of the {\em non-reductive Geometric Invariant Theory}.
\medskip

The previous spaces \(X_{k}^{DS}\) and \(X_{k}^{BK}\) are jet bundles naturally associated to the so-called {\em invariant} jet differentials -- their definition is quite elaborate compared to the {\em Green-Griffiths jet bundles} \(X_{k}^{GG}\) introduced more than 40 years ago (see \cite{GG80}). Quite surprisingly, these latter jet spaces seem to have been a bit overlooked in their potential applications to the problem at hand.
\medskip

In these notes, we will show that it is indeed possible to use \(X_{k} = X_{k}^{GG}\) and that following the strategy described above also yields polynomial degree bounds. More precisely, one can show the following:
\begin{theorem} \label{thm:poly} Let \(n \geq 2\) be an integer.
	\begin{enumerate}
		\item a generic hypersurface \(X \geq \mathbb{P}^{n+1}\) of degree \(d > \frac{153}{4} n^{5}\) is quasi-Brody hyperbolic;
		\item a generic hypersurface \(X \geq \mathbb{P}^{n+1}\) of degree \(d > \frac{153}{4}(2n-1)^{5}\) is Brody hyperbolic.
	\end{enumerate}
\end{theorem}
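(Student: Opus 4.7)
The plan is to follow the slanted vector fields scheme of \cite{DMR10, Bro15, BK24}, but with the Green-Griffiths jet bundles \(X_{k}^{GG}\) in place of the Demailly-Semple or Bérczi-Kirwan bundles. Let \(\pi:\mathcal{X}\to S\) be the universal family of smooth degree-\(d\) hypersurfaces of \(\mathbb{P}^{n+1}\), form the relative Green-Griffiths jet bundle \(p:\mathcal{X}_{k}^{GG}\to S\), and consider its tautological line bundle \(\mathcal{O}_{X_{k}^{GG}}(1)\). By the fundamental vanishing theorem, any entire curve \(f:\mathbb{C}\to X_{s}\) lifts to a curve in \(X_{s,k}^{GG}\) that lies in the common zero locus of every global section of \(\mathcal{O}_{X_{k}^{GG}}(m)\otimes p^{*}A^{-1}\), for any ample line bundle \(A\) on \(X_{s}\). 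The conclusion of item (i) therefore reduces to producing, for generic \(s\), a family of such negatively twisted jet differentials whose common zero locus projects to a proper subvariety of \(X_{s}\).

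The first step is a positivity computation: I show that the line bundle \(\mathcal{O}_{X_{k}^{GG}}(1)\otimes p^{*}\mathcal{O}_{X}(-\delta)\) is big on \(X_{k}^{GG}\) for suitable \(k\) and \(\delta\), by means of Demailly's algebraic Morse inequalities. The key advantage of \(X_{k}^{GG}\) is that it is iteratively a weighted projective bundle over \(X\), so that its top intersection numbers admit an explicit closed form as an integral over the simplex \(\Delta_{k-1}\) of a polynomial in the Segre classes of \(T^{*}_{X}\). For \(X\) a smooth degree-\(d\) hypersurface these Segre classes are themselves explicit in \(d\) and \(n\); after optimising over \(k\sim n\) and a twist \(\delta\) growing suitably in \(d\) and \(n\), the intersection becomes positive as soon as \(d>\tfrac{153}{4}n^{5}\), yielding the desired jet differential on a generic fibre.

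The second step is Siu's slanted vector fields construction, adapted here to the Green-Griffiths jet space. Viewing \(\mathcal{X}_{k}^{GG}\) as a bundle over \(\mathbb{P}^{n+1}\times S\), one builds meromorphic vector fields on the total space, tangent to the fibres of this projection and with controlled pole order, that jointly span the vertical tangent space at a general jet; differentiating the section of step one with these vector fields produces enough sections on a generic fibre to cut out a proper subvariety of \(X_{s}\), yielding item (i). Item (ii) follows by combining item (i) with the Clemens-Ein-Voisin-Pacienza results \cite{cle86, ein88, ein91, voi96, voi98, pac04}: for degree \(d\geq 2n\), subvarieties of a very general hypersurface of \(\mathbb{P}^{n+1}\) inherit enough positivity of their cotangent sheaves to rerun the jet differential argument in lower dimension, so an induction together with a Baire category argument produces Brody hyperbolicity under the slightly degraded bound \(d>\tfrac{153}{4}(2n-1)^{5}\).

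The main obstacle will be the intersection number estimate in step one. The Green-Griffiths jet bundle has weaker invariance properties than the Demailly-Semple or Bérczi-Kirwan spaces, so the Morse integral over \(\Delta_{k-1}\) must be analysed very sharply if a polynomial rather than exponential bound in \(n\) is to be reached; the balance between the jet order \(k\), the negative twist \(\delta\), and the combinatorial expansion of the integrand in the Segre classes of \(T^{*}_{X}\) is delicate, and the explicit constant \(\tfrac{153}{4}\) is the outcome of this optimisation.
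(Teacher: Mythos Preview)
Your outline for item (1) is essentially the paper's own argument: Green-Griffiths jets, algebraic Morse inequalities on the weighted projective bundle \(P_{k}=\mathbb{P}(\mathbf{\Omega}_{k})\), and the slanted vector fields machinery packaged as Theorem~\ref{thm:hypslanted}. Two minor corrections: the negative twist is not optimised in \(d\)---it is fixed once and for all to \(\epsilon=5n+3\) (the pole order coming from the vector fields of \cite{mer09,dar16}), and the jet order is simply \(k=n\), not an asymptotic \(k\sim n\). The constant \(\tfrac{153}{4}\) then drops out of a Fujiwara-type bound on the roots of the degree-\(n\) polynomial \(P(n,d,\epsilon)\) in \(d\) (Proposition~\ref{prop:mainresult}), rather than from any integral optimisation.

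Your argument for item (2), however, does not work. Clemens--Ein--Voisin--Pacienza tell you that subvarieties of a very general hypersurface are of general type, but you cannot ``rerun the jet differential argument in lower dimension'' on an arbitrary general-type subvariety: the whole computation of Section~\ref{sec:maincomputations} is specific to hypersurfaces of \(\mathbb{P}^{n+1}\), and for a general subvariety there is no reason the Morse inequalities produce anything. Absent the full Green--Griffiths--Lang conjecture, the induction you sketch has no base. The paper instead invokes Riedl--Yang \cite{RY22}: their Grassmannian technique shows that if the jet-differential proof of quasi-hyperbolicity works for hypersurfaces of dimension \(n\) with bound \(d_{n}\), then genuine Brody hyperbolicity follows for hypersurfaces of dimension \(n\) with bound \(d_{2n-1}\). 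This is precisely why the bound in item (2) is item (1) with \(n\) replaced by \(2n-1\); your proposed route would not explain that substitution.
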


The fact that the bound looks similar in the second item is no mystery: as in \cite{BK24}, it follows from the work of Riedl-Yang \cite{RY22} that if the first item of Theorem~\ref{thm:DMR} has been proved using e.g. the jet differentials techniques of \cite{DMR10}, then the second item must also hold with \(d_{n}' = d_{2n-1}\).
\medskip

As we explained above, we make no change to the strategy of slanted vector fields: the only new input is the computation of the intersection number coming from the algebraic Morse inequalities. To perform these computations, we will use the theory of {\em weighted projective bundles} and their associated Segre classes, in a manner very similar to some earlier work of the author on jet differentials on compactifications of ball quotients (see \cite{Cad16}).

\subsection{Organization of the article.} These notes are divided in three parts and an annex:
\begin{enumerate}
	\item Section~\ref{sec:weighted}: we gather a few facts on weighted vector bundles, jets spaces and the holomorphic Morse inequalities;
	\item Section~\ref{sec:strategyslanted}: we recall the main criterion for hyperbolicity of generic hypersurfaces, that sums up the strategy of slanted vector fields (see Theorem~\ref{thm:hypslanted}). We then present the positivity statement that is needed to apply the holomorphic Morse inequalities (Proposition~\ref{prop:mainresult}).
	\item Section~\ref{sec:maincomputations}: we prove Proposition~\ref{prop:mainresult}.
	\item Section~\ref{sec:annex}: In an annex to this article, we give a quite elementary proof of the numerical version of the Whitney formula employed in Section~\ref{sec:maincomputations} (see the equation~\eqref{eq:whitney}). The author hopes this proof is even simpler than the one he presented in his thesis; in the end, it is based on straightforward computations of integrals on simplexes (in a manner very similar to the seminal work of Green-Griffiths \cite{GG80}).
\end{enumerate}

\subsection{Acknowledgments} The author wishes to thank Damian Brotbek, Frédéric Campana, Lionel Darondeau, Simone Diverio, Antoine \'{E}tesse, Joël Merker, Eric Riedl and Erwan Rousseau for all the discussions that took place before and during the preparation of this work. Special thanks are due to Gergely B\'{e}rczi and Frances Kirwan for their kind and enlightening explanations on non-reductive GIT, during our stay at the Isaac Newton Institute of Cambridge. 
\medskip

During the preparation of this work, the author was supported by the French ANR project KARMAPOLIS (ANR-21-CE40-0010). The author would also like to thank the Isaac Newton Institute for Mathematical Sciences for the support and hospitality during the programme {\em New equivariant methods in algebraic and differential geometry} when work on this paper was undertaken. This work was supported by: EPSRC Grant Number EP/R014604/1. This work was also partially supported by a grant from the Simons Foundation.

\section{Weighted projective bundles and Green-Griffiths jet differentials} \label{sec:weighted}

We recall here some of the results and notation of \cite{Cad16} and \cite[Chapitre 3]{cad_thesis} pertaining to weighted projective bundles and their intersection theory.

\subsubsection{Weighted projective bundles} Let \(X\) be a complex projective manifold. By a {\em weighted vector bundle} on \(X\), we mean the data of finitely many couples \((E_{i}, a_{i})_{1 \leq i \leq s}\), where \(E_{i} \to X\) are vector bundles, and the \(a_{i} \geq 1\) are integers. We will often write this data under the form
\[
	E_{1}^{(a_{1})} \oplus \dotsc \oplus E_{s}^{(a_{s})}.
\]

Given a weighted vector bundle, we can construct several associated objects on \(X\):

\begin{definition} \label{def:symweight} Let \(\mathbf{E} := E_{1}^{(a_{1})} \oplus \dotsc \oplus E_{s}^{(a_{s})}\) be a weighted vector bundle over \(X\). 
	\begin{enumerate}
		\item The {\em dual} of \(\mathbf{E}\) is \(\mathbf{E}^{\ast} := (E_{1}^{\ast})^{(a_{1})} \oplus \dotsc \oplus (E_{s}^{\ast})^{(a_{s})}\);
		\item The {\em symmetric algebra} of \(\mathbf{E}\) is the graded \(\mathcal{O}_{X}\)-algebra \(S^{\bullet} \mathbf{E} = \bigoplus_{m \in \mathbb{N}} S^{m} \mathbf{E}\) whose pieces are the vector bundles
			\[
				S^{m}(E_{1}^{(a_{1})} \oplus \dotsc \oplus E_{s}^{(a_{s})})
				:=
				\bigoplus_{a_{1} l_{1} + \dotsc + a_{s} l_{s} = m}
				S^{l_{1}} E_{1} \otimes \dotsc \otimes S^{l_{s}} E_{s},
			\]
			endowed with its natural product law \((S^{l_{1}} E_{1} \otimes \dotsc \otimes S^{l_{s}} E_{s}) \otimes (S^{l_{1}'} E_{1} \otimes \dotsc \otimes S^{l_{s}'} E_{s}) \longrightarrow S^{l_{1} + l_{1}'} E_{1} \otimes \dotsc \otimes S^{l_{s} + l_{s}'} E_{s}\).

		\item the {\em weighted projective space} \(\mathbb{P}(\mathbf{E})\) is the projectivized scheme
			\[
				\mathbb{P}(\mathbf{E})
				=
				\mathrm{Proj}_{X} (S^{\bullet} \mathbf{E}).
			\]
	\end{enumerate}
\end{definition}

With the notation of the previous definition, one can also define dually \(\mathbb{P}(\mathbf{E})\) as a \(\mathbb{C}^{\ast}\)-quotient:
\[
	P(\mathbf{E}^{\ast}) = P(E_{1}^{\ast}{}^{(a_{1})} 
	\oplus 
	\dotsc
	\oplus E_{s}^{\ast}{}^{(a_{s})})
	:=
	\quotientd{\left(E_{1}^{\ast} \oplus \dotsc \oplus E_{s}^{\ast}\right) - \{0\}}{\mathbb{C}^{\ast}},
\]
where by \(\{0\}\) we denote the zero section, and the action of \(\lambda \in \mathbb{C}^{\ast}\) on the total space of \(\mathbf{E}^{\ast}\) is given fiberwise by
\[
	\lambda \cdot (v_{1}, \dotsc, v_{r})
	=
	(\lambda^{a_{1}} v_{1}, \dotsc, \lambda^{a_{s}} v_{s}).
\]

This implies that \(\pi: \mathbb{P}(\mathbf{E}) \to X\) is a bundle in weighted projective spaces ; it is endowed with tautological {\em sheaves} \(\mathcal{O}^{\mathrm{sh}}(m)\) (with \(m \in \mathbb{N}\)) for which one has
\[
	\pi_{\ast} \mathcal{O}^{\mathrm{sh}}(m) = S^{m}(\mathbf{E}).
\]
In general, the \(\mathcal{O}^{\mathrm{sh}}(m)\) are not locally trivial, but this is however the case if \(\mathrm{gcd}(a_{1}, \dotsc, a_{s})\) divides \(m\). If one has also \(m>0\), then \(\mathcal{O}^{\mathrm{sh}}(m)\) is a relatively ample line bundle with respect to \(\pi\). In all the following, we will use the notation \(\mathcal{O}(1)\) to denote the \(m\)th-root of \(\mathcal{O}^{\mathrm{sh}}(m)\) as a \(\mathbb{Q}\)-line bundle, i.e. the element \(\mathcal{O}(1) := \frac{1}{m} \mathcal{O}^{\mathrm{sh}}(m)\) in the rational Picard group \(\mathrm{Pic}\, \mathbb{P}(\mathbf{E}) \otimes \mathbb{Q}\). Accordingly, we let \(\mathcal{O}(d) := \mathcal{O}(1)^{\otimes d}\) for any integer \(d \geq 1\); this element coincides with \(\mathcal{O}^{\mathrm{sh}}(d)\) (up to \(\mathbb{Q}\)-linear equivalence) if \(d\) is divisible enough.

\begin{remark}
	Alternatively, one could also see \(\mathbb{P}(\mathbf{E})\) as a smooth Deligne-Mumford stack, endowed with a natural tautological (stacky) line bundle \(\mathcal{O}(1)\). In this case, the bundles \(\mathcal{O}(m)\) can all be seen as line bundles on the corresponding stack, and one has naturally \(\mathcal{O}(1)^{\otimes m} = \mathcal{O}(m)\). 
\end{remark}

\subsubsection{Weighted Segre classes} If \(\mathbf{E} = E_{1}^{(a_{1})} \oplus \dotsc \oplus E_{s}^{(a_{s})}\) is a weighted vector bundle, we gave in \cite{Cad16} a definition of the Segre classes of \(E\) as endomorphisms of the rational Chow rings \((A_{\ast} X)_{\mathbb{Q}}\), as follows. Let \(\alpha \in A_{\ast} X\) be any class on \(X\). Then one lets
\begin{equation} \label{eq:segreclass}
	s_{j}(\mathbf{E}) \cap \alpha = \frac{1}{m^{j + r - 1}} \pi_{\ast} (c_{1} \mathcal{O}(m)^{j + r -1} \cap \pi^{\ast} \alpha).
\end{equation}
where \(\pi : \mathbb{P}(\mathbf{E}^{\ast}) \to X\) is the natural projection, \(r = \sum_{j} \mathrm{rk} E_{j}\) and \(m := \mathrm{lcm}(a_{1}, \dotsc, a_{s})\)\footnote{In \cite{Cad16}, we used the notation \(r = \sum_{j} \mathrm{rk} E_{j} - 1\) instead.} . In this situation, we proved the following Whitney formula in \cite[Proposition~3.2.11]{cad_thesis}, that makes sense as an equality between endomorphisms of \((A_{\ast} X)_{\mathbb{Q}}\):
\begin{equation} \label{eq:whitney}
	s_{\bullet}(E_{1}^{(a_{1})} \oplus \dotsc \oplus E_{s}^{(a_{s})})
	=
	\frac{\mathrm{gcd}(a_{1}, \dotsc, a_{s})}{a_{1} \dotsc a_{s}}
	\prod_{j} s_{\bullet}(E_{j}^{(a_{j})}), \;
\end{equation}
where
	\(
	s_{\bullet}(E^{(a)}) 
	= 
	\frac{1}{a^{\mathrm{rk} E - 1}} \sum_{l} \frac{s_{l}(E)}{a^{l}}
	\)
	for a single vector bundle \(E\) and any integer \(a > 0\). The reader can refer to the annex (see Section~\ref{sec:annex}) for a proof of a numerical version of this formula, based on straightforward computations of Euler characteristics.

\subsubsection{Green-Griffiths jet differentials} Let \(X\) be a complex projective manifold. We refer to \cite[Section~7]{Dem12a} for all definitions related to Green-Griffiths jet differentials. For our purposes, it will be enough to know that for any order \(k \in \mathbb{N}\), we may define the {\em Green-Griffiths algebra of holomorphic jet differentials}
\[
	E_{k, \bullet}^{GG} \Omega_{X} = \bigoplus_{m \in \mathbb{N}} E_{k, m}^{GG} \Omega_{X}.
\]
which is an \(\mathcal{O}_{X}\)-algebra whose sections represent holomorphic differentials equations of order \(k\) on \(X\). The Green-Griffiths jet bundles are the projective schemes associated to these algebras:
\[
	X_{k}^{GG} := \mathbf{Proj}_{X}(E_{k, \bullet}^{GG} \Omega) \overset{p_{k}}{\longrightarrow} X
\]
These spaces are endowed with natural tautological \(\mathbb{Q}\)-line bundles \(\mathcal{O}_{GG, k}(1)\) such that
\[
	E_{k, m}^{GG} \Omega_{X} = p_{k}^{\ast} \mathcal{O}_{GG, k}(m).
\]

One of the crucial properties of the Green-Griffiths algebra is the existence of a natural filtration whose graded object is easy to describe in terms of weighted vector bundles.

\begin{theorem}[Green-Griffiths \cite{GG80}, see also {\cite[Section~7]{Dem12a}}]
	Let \(k \in \mathbb{N}\). Then there exists a filtration \(F\) on \(E_{k, \bullet}^{GG} \Omega_{X}\), compatible with its structure of \(\mathcal{O}_{X}\)-algebra, and whose associated graded algebra satisfies 
	\[
		\mathrm{Gr}_{F}(E_{k, \bullet}^{GG} \Omega_{X}) \cong S^{\bullet} \mathbf{\Omega}_{k} ,
	\]
	where \(\mathbf{\Omega}_{k} := \Omega_{X}^{(1)} \oplus \dotsc \oplus \Omega_{X}^{(k)}\).
\end{theorem}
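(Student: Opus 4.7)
The plan is to construct the filtration $F$ explicitly in local coordinates via the Faà di Bruno formula, then verify its coordinate-invariance by exploiting the triangular form of the resulting transformation law on jet variables. I would first recall the local description: over a chart $U \subset X$ with coordinates $z=(z_1,\dots,z_n)$, a section of $E_{k,m}^{GG}\Omega_X$ is a polynomial $P(z^{(1)},\dots,z^{(k)})$ in the jet variables $z_i^{(j)}$, weighted homogeneous of total degree $m$ when $z_i^{(j)}$ carries weight $j$. The monomials $\prod_{j,i}(z_i^{(j)})^{\alpha_{j,i}}$ with $\sum_{j,i} j\,\alpha_{j,i}=m$ form a basis; to each I attach its multidegree $\underline{l}(\alpha):=(l_1,\dots,l_k)$ with $l_j:=\sum_i \alpha_{j,i}$. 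Analyzing a change of coordinates $z=z(w)$ via Faà di Bruno gives
\[
z_i^{(j)} \;=\; \sum_l \frac{\partial z_i}{\partial w_l}\,w_l^{(j)} \;+\; Q_{j,i}\bigl(w^{(1)},\dots,w^{(j-1)}\bigr),
\]
where $Q_{j,i}$ is a polynomial of weighted degree $j$ in strictly lower-order jet variables, so that the leading part of $z_i^{(j)}$ transforms tensorially as a section of $\Omega_X$.

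Next I would introduce the filtration itself. Let $\preceq$ be the reverse lexicographic order on multidegrees (compare $l_k$ first, then $l_{k-1}$, etc.), and for each admissible $\underline{l}$ let $F_{\underline{l}}\subset E_{k,m}^{GG}\Omega_X$ be the subsheaf locally spanned by monomials of multidegree $\preceq\underline{l}$. Substituting the formula above into a multidegree-$\underline{l}$ monomial, the contribution that uses only the $w_l^{(j)}$-linear terms has $w^{(k)}$-degree exactly $l_k$ and, recursively on $j$, $w^{(j)}$-degree exactly $l_j$, while every other contribution strictly decreases the $w^{(k)}$-degree. Hence the transformed expression is a sum of monomials of multidegree $\preceq\underline{l}$, so $F_{\underline{l}}$ is coordinate-independent. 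Additivity of multidegrees under monomial products immediately yields $F_{\underline{l}}\cdot F_{\underline{l}'}\subset F_{\underline{l}+\underline{l}'}$, so $F$ is compatible with the algebra structure.

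Finally, in the graded piece $F_{\underline{l}}/F_{\prec\underline{l}}$, each basis monomial survives modulo its lower-multidegree corrections and, by the leading term of Faà di Bruno, transforms tensorially as a section of $S^{l_1}\Omega_X\otimes\cdots\otimes S^{l_k}\Omega_X$. Summing over the admissible $\underline{l}$ with $\sum_j j\,l_j=m$ then recovers $S^m\mathbf{\Omega}_k$, with a multiplicative identification, as desired.

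The main technical obstacle I expect is the bookkeeping in the Faà di Bruno expansion: one must check cleanly, by induction on $j$, that every correction term lies in a strictly lower stratum of the reverse lex filtration, which relies precisely on the fact that the weighted grading forces each $Q_{j,i}$ to involve only strictly lower-order jet variables.
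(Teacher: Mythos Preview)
The paper does not give its own proof of this theorem: it is stated as a known result with references to Green--Griffiths and Demailly, and the argument you outline is essentially the standard one found in those references.

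One point in your write-up deserves correction. You assert that ``every other contribution strictly decreases the $w^{(k)}$-degree'', but this is false as stated: a term in the expansion that uses the linear part of every $z^{(k)}$ factor but the remainder $Q_{j,i}$ of some $z_i^{(j)}$ with $j<k$ still has $w^{(k)}$-degree exactly $l_k$. The correct statement is that either the $w^{(k)}$-degree strictly drops, or it equals $l_k$ and one then recurses on the $w^{(k-1)}$-degree, and so on down to $w^{(1)}$. This is precisely the induction on $j$ that you flag in your final paragraph, but it must replace the claim in the body rather than supplement it. With this recursive formulation the coordinate-invariance of $F_{\underline{l}}$ goes through. Equivalently, as Demailly organizes it, one may filter by the single integer $l_k$ alone to obtain
\[
\mathrm{Gr}_{l_k} E^{GG}_{k,m}\Omega_X \;\cong\; S^{l_k}\Omega_X \otimes E^{GG}_{k-1,\, m-kl_k}\Omega_X
\]
and then induct on $k$, which sidesteps the reverse-lex bookkeeping entirely.
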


By elementary considerations on short exact sequences (\cite[Lemma~2.15]{Mer15}, see also \cite[Proposition~2.2]{Cad19}), one has, for any line bundle \(L \to X\):
\[
	h^{0}(E_{k, m}^{GG} \Omega_{X} \otimes L) \geq 
	h^{0}(X, S^{m}\mathbf{\Omega}_{k} \otimes L)  
	- 
	h^{1}(X, S^{m}\mathbf{\Omega}_{k} \otimes L).
\]

We will use this inequality jointly with the following result:

\begin{proposition} Let \(P_{k} := \mathbb{P}(\mathbf{\Omega}_{k}) \overset{\pi_{k}}{\longrightarrow} X\), and let \(L\) be a line bundle on \(X\). Then one has, for any \(1 \leq i \leq \dim X\) and any \(m \geq 1\) divisible by \(\gcd(1, 2, \dotsc, k)\):
\[
	h^{i}(X, S^{m} {\bf \Omega}_{k} \otimes L) 
	= 
	h^{i}(P_{k}, \mathcal{O}_{P_{k}}(m) \otimes \pi_{k}^{\ast} L),
\]
	where \(\mathcal{O}_{P_{k}}(m)\) are the tautological line bundles on \(P_{k}\).
\end{proposition}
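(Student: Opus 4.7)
The plan is to apply the Leray spectral sequence for the projection $\pi_k \colon P_k \to X$. By the projection formula,
\[
R^q \pi_{k*}\bigl(\mathcal{O}_{P_k}(m) \otimes \pi_k^{*} L\bigr) \;\cong\; \bigl(R^q \pi_{k*} \mathcal{O}_{P_k}(m)\bigr) \otimes L,
\]
so the proposition follows once I establish the two identities $\pi_{k*} \mathcal{O}_{P_k}(m) \cong S^m \mathbf{\Omega}_k$ and $R^q \pi_{k*} \mathcal{O}_{P_k}(m) = 0$ for all $q \geq 1$, under the divisibility hypothesis on $m$.

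The first identity has in essence already been recorded in the preceding discussion of the paper: as soon as $m$ satisfies the divisibility condition, the tautological sheaf $\mathcal{O}^{\mathrm{sh}}(m)$ is locally free, coincides with $\mathcal{O}_{P_k}(m)$, and pushes forward to $S^m \mathbf{\Omega}_k$ by the very definition of the weighted projective bundle $P_k = \mathbb{P}(\mathbf{\Omega}_k)$ through $\mathrm{Proj}$. For the vanishing of the higher direct images, I would use that $\pi_k$ is proper and Zariski-locally trivial with fibers being weighted projective spaces of the form $\mathbb{P}\bigl(V^{(1)} \oplus V^{(2)} \oplus \dots \oplus V^{(k)}\bigr)$ with $V = \Omega_{X,x}$, and then invoke flatness plus cohomology and base change to reduce the claim to a fiberwise statement.

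The main step is therefore to check that on a weighted projective space $\mathbb{P}(\mathbf{a})$ one has $H^q(\mathbb{P}(\mathbf{a}), \mathcal{O}(m)) = 0$ for every $q \geq 1$ and $m \geq 0$ with the appropriate divisibility. This is classical: $\mathbb{P}(\mathbf{a})$ is a toric Fano variety and $\mathcal{O}(m)$ is nef for $m \geq 0$, so Demazure vanishing applies; alternatively, one can give a direct argument via \v{C}ech cohomology on the standard affine covering, identifying the cohomology groups with weight-graded pieces of appropriate Laurent polynomial rings and checking that only the degree $q=0$ piece is nonzero. I expect this fiberwise vanishing to be the most delicate point, but it reduces in the end to a standard toric computation. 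Putting everything together, the Leray spectral sequence degenerates at $E_2$ and gives
\[
H^i\bigl(P_k, \mathcal{O}_{P_k}(m) \otimes \pi_k^{*} L\bigr) \;=\; H^i\bigl(X, S^m \mathbf{\Omega}_k \otimes L\bigr)
\]
for every $i \geq 0$, which in particular yields the equality in the stated range $1 \leq i \leq \dim X$.
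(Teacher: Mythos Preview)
Your proposal is correct and follows essentially the same route as the paper: Leray spectral sequence plus projection formula, reduction to the vanishing of $R^{q}(\pi_{k})_{\ast}\mathcal{O}_{P_{k}}(m)$ for $q\geq 1$, and a fiberwise check using the known cohomology of weighted projective spaces (the paper cites Dolgachev's \cite[1.4~Theorem]{dol82} for this last step). Your additional remarks on Demazure vanishing and the \v{C}ech argument are fine elaborations, but the underlying strategy is identical.
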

\begin{proof}
	By the Leray spectral sequence and the projection formula, it suffices to show that \(R^{i} (\pi_{k})_{\ast} \mathcal{O}_{P_{k}}(m) = 0\) for any \(i > 0\) and any \(m\) divisible enough. This can be checked fiberwise, and immediately follows from the corresponding results for the cohomology of weighted projective spaces (see e.g. \cite[1.4~Theorem]{dol82}).
\end{proof}

In particular, if we consider a very ample line bundle \(\mathcal{O}_{X}(1)\) on \(X\) and any positive \(\epsilon > 0\), one has
\begin{equation} \label{eq:lowerboundhOh1}
	h^{0}(X_{k}^{GG}, \mathcal{O}_{GG, k}(m) \otimes p_{k}^{\ast} \mathcal{O}(-m \epsilon))
	\geq
	(h^{0} - h^{1})(P_{k}, \mathcal{O}_{P_{k}}(m) \otimes \pi_{k}^{\ast} \mathcal{O}(-m \epsilon))
\end{equation}

\subsubsection{Holomorphic Morse inequalities}

Let us recall the statement of the famous holomorphic Morse inequalities, in the version proved by Siu:

\begin{theorem} \label{thm:HMI}
	[{Siu~\cite{siu93}, Demailly~\cite[§12]{dem96}, see also \cite[Remark 2.2.20]{laz04}}]
	Let \(Y\) be a complex projective variety of dimension \(n\). Let \(A, B\) be two nef line bundles on \(Y\), and let \(L := A \otimes B^{-1}\). Then one has
	\[
		(h^{0} - h^{1}) (Y, L^{\otimes m})
		\geq 
		(A^{n} - n A^{n-1} \cdot B) \frac{m^{n}}{n!}
		+
		O(m^{n-1}).
	\]
\end{theorem}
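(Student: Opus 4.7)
The plan is to prove this via iterated restriction to a smooth divisor, combined with Serre vanishing and Riemann-Roch. First I would reduce to the case where $A$ and $B$ are both ample and $B$ is very ample: the reduction to ample is achieved by perturbing with a small rational multiple of an ample class (both sides are continuous in the line-bundle coefficients up to $O(m^{n-1})$), and the reduction to very ample is achieved by replacing $A, B$ with large common multiples and rescaling $m$. By Bertini, pick then a smooth divisor $D \in |B|$.

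The algebraic heart of the argument is the short exact sequence
\[
0 \longrightarrow A^{\otimes m}(-jD) \longrightarrow A^{\otimes m}(-(j-1)D) \longrightarrow A^{\otimes m}(-(j-1)D)\big|_D \longrightarrow 0.
\]
Inspecting the first four terms of its long exact sequence in cohomology yields the recursive inequality $(h^0 - h^1)(F) \geq (h^0 - h^1)(G) - h^0(H)$; iterating from $j = 1$ to $j = m$, and using that $A^{\otimes m}(-mD) = L^{\otimes m}$, telescopes into
\[
(h^0 - h^1)(Y, L^{\otimes m}) \;\geq\; (h^0 - h^1)(Y, A^{\otimes m}) \;-\; \sum_{j=0}^{m-1} h^0\bigl(D, A^{\otimes m}(-jD)\big|_D\bigr).
\]

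Next I would estimate each term asymptotically. Ampleness of $A$ combined with Serre vanishing and Riemann-Roch gives $(h^0 - h^1)(Y, A^{\otimes m}) = \chi(Y, A^{\otimes m}) = \frac{A^n}{n!} m^n + O(m^{n-1})$. For the sum, the key observation is that because $D$ is very ample, the self-restriction $D|_D \equiv B|_D$ is an effective divisor class on $D$ (represented, e.g.\ by $D \cap D'$ for a generic $D' \in |B|$), so there is a sheaf inclusion $\mathcal{O}_D(-jD) \hookrightarrow \mathcal{O}_D$ for every $j \geq 0$, whence
\[
h^0\bigl(D, A^{\otimes m}(-jD)\big|_D\bigr) \;\leq\; h^0\bigl(D, A^{\otimes m}|_D\bigr).
\]
A further application of Riemann-Roch and Serre vanishing on the $(n-1)$-fold $D$ with the ample bundle $A|_D$ yields $h^0(D, A^{\otimes m}|_D) = \frac{A^{n-1} \cdot B}{(n-1)!} m^{n-1} + O(m^{n-2})$, and summing the $m$ terms gives $\sum \leq \frac{n\,A^{n-1} \cdot B}{n!} m^n + O(m^{n-1})$. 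Substituting into the telescoped inequality yields the claimed bound.

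The main obstacle — and the decisive step — is obtaining a uniform upper bound on $h^0(D, A^{\otimes m}(-jD)|_D)$, since naive Riemann-Roch on $D$ fails once $A - (j/m) B$ is no longer ample on $D$. The clever use of the effectivity of $D|_D$ described above sidesteps the issue entirely, avoiding any induction on the dimension $n$. With that in place, the remainder of the argument is a routine asymptotic calculation.
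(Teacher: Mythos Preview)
The paper does not prove this statement: Theorem~\ref{thm:HMI} is quoted as a known result with references to Siu, Demailly, and Lazarsfeld, and no argument is supplied. Your sketch is essentially Siu's standard algebraic proof (the one outlined in Lazarsfeld's book), and the core of it --- the telescoping via the short exact sequence together with the bound $h^0(D, A^{\otimes m}(-jD)|_D) \leq h^0(D, A^{\otimes m}|_D)$ coming from effectivity of $B|_D$ --- is correct and is indeed the decisive idea.

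A couple of small points. First, a minor miscount: you need the exact sequence through $H^1(G)$ (five terms, not four) to obtain $(h^0-h^1)(F) \geq (h^0-h^1)(G) - h^0(H)$. Second, your reduction step has a genuine gap: replacing $(A,B)$ by $(kA,kB)$ replaces $L$ by $kL$, so after ``rescaling $m$'' you only get the inequality for $m$ divisible by $k$; since $k = k_\epsilon$ depends on the perturbation, letting $\epsilon \to 0$ does not recover all $m$. One clean fix is to run the same telescoping argument with an arbitrary fixed twist $M$ on the left (the asymptotics are unaffected), and then take $M = L^r$ for each residue $r \pmod k$. For the paper's purposes this is moot anyway, since the theorem is only invoked to establish bigness, for which a single arithmetic progression in $m$ suffices. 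Finally, note that $Y$ is only assumed to be a projective variety (in the application it is the possibly singular weighted projective bundle $P_k$), so Bertini will not in general give a \emph{smooth} $D$; fortunately your argument only needs $D$ to be an effective Cartier divisor with $B|_D$ admitting a non-zero-divisor section, which a general member of a very ample system still provides.
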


\subsubsection{Nefness of adequate twists} With the notation of the previous section, our wish in Section~\ref{sec:strategyslanted} will be to apply the holomorphic Morse inequalities to a line bundle of the form \(\mathcal{O}_{P_{k}}(m) \otimes \mathcal{O}_{X}(-m \epsilon)\). To do this, the following statement will be quite useful. 

\begin{proposition} \label{prop:nef}
	Let \(\mathcal{O}_{X}(1)\) be a very ample line bundle on \(X\). Then the \(\mathbb{Q}\)-line bundle \(L_{k} = \mathcal{O}_{P_{k}}(1) \otimes \pi^{\ast}_{k} \mathcal{O}_{X}(2)\) is nef on \(P_{k}\).
\end{proposition}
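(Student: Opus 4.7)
The plan is to show that some positive power $L_k^{\otimes m}$ is a nef genuine line bundle (up to $\mathbb{Q}$-linear equivalence), which by the usual characterisation of nefness for $\mathbb{Q}$-line bundles yields the claim. I would take $m$ to be a multiple of $\mathrm{lcm}(1, \ldots, k)$, so that $\mathcal{O}_{P_k}(m)$ can be identified with the genuine line bundle $\mathcal{O}^{\mathrm{sh}}_{P_k}(m)$, and the pushforward formula gives $\pi_{k,\ast}(L_k^{\otimes m}) = S^m \mathbf{\Omega}_k \otimes \mathcal{O}_X(2m)$.

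The key reshuffling is the following. Since any weighted partition $j_1 + 2j_2 + \cdots + k j_k = m$ satisfies $2m = \sum_l 2l \cdot j_l$, I would distribute the factor $\mathcal{O}_X(2m)$ as $\bigotimes_l \mathcal{O}_X(2l)^{\otimes j_l}$ and identify the corresponding summand of $\pi_{k,\ast} L_k^{\otimes m}$ with
\[
\bigotimes_{l=1}^{k} S^{j_l}\bigl(\Omega_X \otimes \mathcal{O}_X(2l)\bigr).
\]
The very ampleness of $\mathcal{O}_X(1)$ gives global generation of $\Omega_X(2)$ (using the surjection $\Omega_{\mathbb{P}^N}|_X \twoheadrightarrow \Omega_X$ coming from the embedding defined by $\mathcal{O}_X(1)$, together with the standard sections $x_i\, dx_j - x_j\, dx_i$ of $\Omega_{\mathbb{P}^N}(2)$). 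Hence $\Omega_X(2l)$ is globally generated for every $l \geq 1$, and since symmetric powers, tensor products, and direct sums preserve this property, $\pi_{k,\ast} L_k^{\otimes m}$ is globally generated on $X$.

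To conclude, I would need to promote global generation of the pushforward to global generation of $L_k^{\otimes m}$ itself, which amounts to surjectivity of the evaluation map $\pi_k^\ast \pi_{k,\ast}(L_k^{\otimes m}) \to L_k^{\otimes m}$. After factoring out $\pi_k^\ast \mathcal{O}_X(2m)$, this reduces to surjectivity of $\pi_k^\ast S^m \mathbf{\Omega}_k \to \mathcal{O}^{\mathrm{sh}}_{P_k}(m)$, which I would check fibrewise: on each weighted projective fibre $\mathbb{P}(1^n, 2^n, \ldots, k^n)$ the monomials $v_i^{m/a_i}$ (where $a_i$ is the weight of the coordinate $v_i$) have no common zero, precisely because $m$ is divisible by every $a_i$. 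This is the one step where the divisibility hypothesis really enters, and I expect it to be the most delicate part; everything else is routine manipulation of symmetric algebras, direct sums, and twists.
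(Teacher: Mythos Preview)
Your proof is correct and follows essentially the same route as the paper: both arguments hinge on the reshuffling $S^{m}\mathbf{\Omega}_{k}\otimes\mathcal{O}_{X}(2m)\cong S^{m}\bigl(\bigoplus_{l}\Omega_{X}(2l)^{(l)}\bigr)$ and the global generation of each $\Omega_{X}(2l)$ via the embedding sections $Z_{i}\,dZ_{j}-Z_{j}\,dZ_{i}$. The only cosmetic difference is that the paper phrases the conclusion by identifying $L_{k}$ with the tautological $\mathbb{Q}$-line bundle of the twisted weighted bundle, whereas you spell out explicitly the surjectivity of $\pi_{k}^{\ast}\pi_{k,\ast}L_{k}^{\otimes m}\to L_{k}^{\otimes m}$ on the fibres; your fibrewise check with the monomials $v_{i}^{m/a_{i}}$ is exactly what underlies the paper's one-line assertion that ``$L_{k}^{\otimes m}$ is globally generated.''
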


\begin{proof}
	It follows from Definition~\ref{def:symweight}~{\em (1)} that for any \(m \geq 0\), one has
	\[
		S^{m}(\mathbf{\Omega}_{k}) \otimes \mathcal{O}(2m)
		\cong
		S^{m}( \bigoplus_{1 \leq l \leq k} \Omega_{X}(2l)^{(l)})
	\]
	for any \(m \in \mathbb{N}\). This implies that \(L_{k}\) can be seen as the \(\mathbb{Q}\)-tautological line bundle of the weighted vector bundle
	\[
		\mathbb{P}(\bigoplus_{1 \leq l \leq k} \Omega_{X}(2l)^{(l)}),
	\]
	which is naturally isomorphic to \(P_{k}\) as a scheme above \(X\). However, Lemma~\ref{lem:globalgen} below implies that each of the pieces \(\Omega_{X}(2l)\) is globally generated. This implies that for \(m\) divisible enough, the line bundle \(L_{k}^{\otimes m}\) is globally generated as well, and hence is nef.
\end{proof}

The following very classical lemma was used in the proof of the previous proposition.
\begin{lemma} \label{lem:globalgen} Let \(X \subset \mathbb{P}^{n}\) be a submanifold, and let \(\mathcal{O}_{X}(1)\) be the associated very ample line bundle. Then \(\Omega_{X}(2)\) is globally generated.
\end{lemma}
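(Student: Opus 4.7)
The plan is to reduce to the case $X = \mathbb{P}^n$ via the restriction of Kähler differentials, and then to exhibit explicit generating sections on $\mathbb{P}^n$ coming from the twisted Euler sequence.

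First I would handle the ambient case $X = \mathbb{P}^n = \mathbb{P}(V)$ with $V = \mathbb{C}^{n+1}$. Twisting the dual Euler sequence by $\mathcal{O}(2)$ gives
\[
0 \to \Omega_{\mathbb{P}^n}(2) \to V^{\ast} \otimes \mathcal{O}_{\mathbb{P}^n}(1) \to \mathcal{O}_{\mathbb{P}^n}(2) \to 0,
\]
so that $\Omega_{\mathbb{P}^n}(2)$ sits as a subsheaf of a globally generated bundle. The key is then to produce generating global sections directly: the $1$-forms $\omega_{ij} := x_i\, dx_j - x_j\, dx_i$ on $V$ are homogeneous of weight $2$ and annihilated by the Euler field, hence descend to elements of $H^0(\mathbb{P}^n, \Omega_{\mathbb{P}^n}(2))$. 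A direct computation on any affine chart $\{x_a \neq 0\}$, with coordinates $z_i = x_i/x_a$, shows that $\omega_{aj}/x_a^2 = dz_j$, so the $n$ sections $(\omega_{aj})_{j \neq a}$ trivialize $\Omega_{\mathbb{P}^n}(2)$ over that chart. As these charts cover $\mathbb{P}^n$, global generation follows.

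For a general smooth submanifold $X \subset \mathbb{P}^n$, the conormal sequence
\[
0 \to N^{\ast}_{X/\mathbb{P}^n} \to \Omega_{\mathbb{P}^n}\big|_X \to \Omega_X \to 0
\]
is exact, and twisting by $\mathcal{O}_X(2)$ (the restriction of $\mathcal{O}_{\mathbb{P}^n}(2)$) yields a surjection $\Omega_{\mathbb{P}^n}(2)|_X \twoheadrightarrow \Omega_X(2)$. Since restriction and taking quotients both preserve global generation, the lemma follows from the first step.

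The only calculation in the entire argument is the chart-level verification on $\mathbb{P}^n$, which is completely routine; the rest is a formal consequence of the conormal sequence. I do not anticipate any real obstacle.
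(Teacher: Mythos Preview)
Your proof is correct and follows essentially the same approach as the paper: reduce to $\mathbb{P}^n$ via the surjection $\Omega_{\mathbb{P}^n}|_X \twoheadrightarrow \Omega_X$, then exhibit the explicit sections $x_i\,dx_j - x_j\,dx_i \in H^0(\mathbb{P}^n, \Omega_{\mathbb{P}^n}(2))$ and check on each affine chart that they restrict to the standard frame $dz_j$. Your additional remarks on the Euler and conormal sequences are sound framing, but the core argument is identical to the paper's.
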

\begin{proof}
	Since the restriction map \(\Omega_{\mathbb{P}_{n}}|_{X} \to \Omega_{X}\) is onto, it suffices to prove the result for \(X = \mathbb{P}^{n}\). Let \(Z_{0}, \dotsc, Z_{n}\) be homogeneous coordinates, and \(z_{i} := \frac{Z_{i}}{Z_{0}}\) be the associated inhomogeneous coordinates on the chart \(U_{0} := \{ Z_{0} \neq 0 \}\). Then the elements 
	\[
		Z_{0}^{2} dz_{i} = Z_{0} dZ_{i} - Z_{i} dZ_{0}
		\in
		H^{0}(\mathbb{P}^{n}, \Omega_{\mathbb{P}^{n}}(2))
	\]
	generate \(\Omega_{\mathbb{P}^{n}}(2)\) on \(U_{0}\). The same reasoning also holds for the other charts \(U_{j}\).
\end{proof}

\begin{remark}
	Using the semi-continuity of the {\em nef} property for the countable Zariski topology, we can see as in \cite[Proposition~4.4]{Cad16} that the \(\mathbb{Q}\)-line bundle \(\mathcal{O}_{GG, k}(1) \otimes p_{k}^{\ast} \mathcal{O}(2)\) is nef on \(X_{k}^{GG}\). The fact that we may obtain a nef line bundle on \(X_{k}^{GG}\) by taking a twist on the base {\em independently of \(k\)} is in stark contrast with the case of the Demailly-Semple tower, where we need an twist on the base growing exponentially fast as we climb the jet tower (see \cite[Proposition~7.19]{Dem12a}). As B\'{e}rczi-Kirwan remarked in \cite{BK24}, it is also possible to use a constant twist for their non-reductive GIT quotient \(X_{k}^{BK}\), which makes the holomorphic Morse inequalities much easier to satisfy. 
\end{remark}

\section{Statement of the main results} \label{sec:strategyslanted}

In this section, we state the main estimates which, joint with Siu's strategy of slanted vector fields and Riedl-Yang's work \cite{RY22}, allow to derive the main result. All of this has become quite classical, so we will only quote the necessary statements, and refer to the original articles for more details.
\medskip

We fix an integer \(n \geq 2\). The general slanted jet techniques give the following: 

\begin{theorem} [\cite{DMR10, mer09, dar16}] \label{thm:hypslanted}
	Fix \(d \geq n\).  Assume that for any smooth hypersurface \(X \subset \mathbb{P}^{n+1}\) of degree \(d\), we have proven that the \(\mathbb{Q}\)-line bundle
	\[
		\mathcal{O}_{n, GG}(1)  \otimes p_{n}^{\ast} \mathcal{O}_{X} (- (5n +3)))
	\]
	is big. Then the generic hypersurface of degree \(d\) is quasi-Brody hyperbolic.
\end{theorem}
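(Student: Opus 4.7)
The plan is to follow the by-now classical \emph{slanted vector fields} strategy of Siu~\cite{siu04}, Diverio-Merker-Rousseau~\cite{DMR10}, Merker~\cite{mer09}, and Darondeau~\cite{dar16}. The first step is to convert the bigness assumption into the existence of Green-Griffiths jet differentials with negative twist: for any smooth \(X \subset \mathbb{P}^{n+1}\) of degree \(d\) and any \(m\) sufficiently divisible, bigness of \(\mathcal{O}_{GG,n}(1) \otimes p_n^{\ast} \mathcal{O}_X(-(5n+3))\) yields many sections \(P \in H^0\bigl(X_n^{GG}, \mathcal{O}_{GG,n}(m) \otimes p_n^{\ast} \mathcal{O}_X(-m(5n+3))\bigr)\), equivalently jet differentials of order \(n\) and weight \(m\) on \(X\) vanishing to order \(m(5n+3)\) along a hyperplane section of \(X\).

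Second, I would pass to the universal picture: let \(U \subset |\mathcal{O}_{\mathbb{P}^{n+1}}(d)|\) be the open locus of smooth hypersurfaces, \(\pi : \mathcal{X} \to U\) the universal hypersurface, and \(\mathcal{X}_n^{GG} \to \mathcal{X} \to U\) the relative Green-Griffiths jet space. Since the bigness assumption holds on every fiber, a standard relative-\(\mathrm{Proj}\) argument (combined with Serre vanishing to extend sections over a non-empty Zariski open \(U' \subset U\)) produces a single global section \(\mathbf{P}\) on \(\mathcal{X}_n^{GG}|_{U'}\) of the appropriate twist of \(\mathcal{O}_{GG,n}(m)\), restricting on each fiber to a non-zero jet differential as above.

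The third and technically central step is to invoke the Merker-Darondeau construction of global meromorphic vector fields on \(\mathcal{X}_n^{GG}|_{U'}\): these are tangent to the relative fibers \(\mathcal{X}_n^{GG} \to U\), generate the vertical tangent bundle at every smooth point, and have poles of order at most \(5n+3\) measured against \(\mathcal{O}_{\mathbb{P}^{n+1}}(1)\). Differentiating \(\mathbf{P}\) iteratively along these vector fields uses the negative twist \(-m(5n+3)\) precisely to absorb the poles, producing on every hypersurface \(X \in U'\) a sufficiently large family of jet differentials whose common vanishing locus in \(X_n^{GG}\) projects to a proper algebraic subset \(Z \subsetneq X\) for \(X\) generic.

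Finally, Demailly's fundamental vanishing theorem (valid precisely because the twist \(-m(5n+3) H\) is negative, via the Ahlfors-Schwarz lemma) forces the \(n\)-jet lift \(j_n f : \mathbb{C} \to X_n^{GG}\) of any non-constant entire curve \(f : \mathbb{C} \to X\) to lie in the base locus of this family, hence \(f(\mathbb{C}) \subset Z\), yielding quasi-Brody hyperbolicity for the generic \(X\). The main obstacle in this plan is clearly the slanted vector fields step with the sharp pole order \(5n+3\), which is the technical heart of~\cite{mer09,dar16} and the exact source of the numerical constant in the statement; everything else is bookkeeping, the essential point being that the negative twist in the hypothesis is calibrated to match the pole bound of the vector fields.
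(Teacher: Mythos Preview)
Your outline is correct and matches the standard argument from the cited references \cite{DMR10, mer09, dar16}. Note, however, that the paper does not actually give its own proof of this theorem: it is quoted as a known result, with the author writing ``All of this has become quite classical, so we will only quote the necessary statements, and refer to the original articles for more details.'' So there is no proof in the paper to compare against; your sketch accurately summarizes the slanted-vector-fields strategy of those references, including the identification of the twist \(-(5n+3)\) with Darondeau's pole-order bound, which is precisely the point.
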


Let \(\epsilon > 0\) and \(k \in \mathbb{N}_{\geq 1}\) to be fixed later. To prove the bigness of the \(\mathbb{Q}\)-line bundle \(\mathcal{O}_{k, GG}(1)  \otimes p_{k}^{\ast} \mathcal{O}_{X} (- \epsilon)\), we see in view of \eqref{eq:lowerboundhOh1} and Theorem~\ref{thm:HMI} that it suffices to apply the Morse inequalities to the following \(\mathbb{Q}\)-line bundle on \(P_{k} = \mathbb{P}(\mathbf{\Omega}_{k})\):
\[
	M := \mathcal{O}_{P_{k}}(1) \otimes \pi_{k}^{\ast} \mathcal{O}_{X}(-\epsilon)
\]
To do this, use first Proposition~\ref{prop:nef} to write it as a difference of two nef line bundles
\[
	M = A - B,
\]
with \(A = \mathcal{O}_{P_{k}^{GG}}(1) \otimes \pi_{k}^{\ast} \mathcal{O}_{X}(2)\) and \(B = p_{k}^{\ast} \mathcal{O}_{X}(2 + \epsilon)\). The Morse inequalities then ask to show the positivity of 
\[
	P(n, d, \epsilon) := A^{N_{k}} - N_{k} A^{N_{k}} \cdot B.
\]
where \(N_{k} = \dim P_{k} = n + nk - 1\). To satisfy the hypothesis of Theorem~\ref{thm:hypslanted}, we will just have to specialize to the case \(k = n\) and \(\epsilon = 5n + 3\).
\medskip

This positivity of \(P(n, d, \epsilon)\) can be achieved thanks to the following proposition, that will be proved in Section~\ref{sec:maincomputations}. The idea of using the Fujiwara estimates (see Lemma~\ref{lem:fujiwara}) originally stems from \cite{DMR10} and has been used again e.g. in \cite{dar16a, BK24}.

\begin{proposition} [Main estimates] \label{prop:mainresult} Assume \(n \geq 2\), and fix \(k = n\). Let \(\epsilon > 0\) be a rational number.
	\begin{enumerate}
		\item One may write
			\begin{equation} \label{eq:polynomial}
				P(n, d, \epsilon) = d \sum_{j=0}^{n} Q_{j}(n, \epsilon) d^{j}.
			\end{equation}
			for some polynomials with rational coefficients \(Q_{j}(n, \epsilon)\). The leading term \(Q_{n}(n, \epsilon) > 0\) actually depends only on \(n\).
		\item There is a number \(D_{\epsilon} > 0\), depending only on \(\epsilon\) such that
			\begin{equation*}
				| Q_{j}(n, \epsilon) | < (D_{\epsilon} n^{4})^{n-j} Q_{n}(n, \epsilon) \label{eq:fujiwarabound}
			\end{equation*}
			for all \(j \geq 1\). One may actually take \(D_{\epsilon} = \max(\frac{27}{2}, 9(1 + \frac{\epsilon}{4}))\).
		\item (Fujiwara bound) As a consequence, if
			\[
				d > 2 D_{\epsilon}\cdot n^{4}
			\]
			then \(P(n, d, \epsilon) > 0\). For such values of \(n, d, \epsilon\), the line bundle
			\[
				\mathcal{O}_{X_{n}^{GG}}(1) \otimes \pi^{\ast}_{n} \mathcal{O}_{X}(-\epsilon)
			\]
			is big.
	\end{enumerate}
\end{proposition}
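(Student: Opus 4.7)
The plan is to convert the intersection numbers $A^{N_n}$ and $A^{N_n-1}\cdot B$ on $P_n=\mathbb{P}(\mathbf{\Omega}_n)$ into explicit polynomial expressions in $d$ (with coefficients depending on $\epsilon$), and then invoke a Fujiwara-type root bound for the three stated conclusions.

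First I would exploit the identification already used in the proof of Proposition~\ref{prop:nef}: as schemes above $X$, $P_n\cong \mathbb{P}\bigl(\bigoplus_{l=1}^n \Omega_X(2l)^{(l)}\bigr)$, and $A=\mathcal{O}_{P_n}(1)\otimes \pi_n^*\mathcal{O}_X(2)$ becomes the $\mathbb{Q}$-tautological bundle of this weighted projective bundle. Setting $\mathbf{E}:=\bigoplus_{l=1}^n \Omega_X(2l)^{(l)}$ (of total rank $r=n^2$), the weighted Segre class formula~\eqref{eq:segreclass} gives $\pi_{n,*}(A^{j+r-1})=s_j(\mathbf{E})$, whence
\[
    A^{N_n}=\deg_X s_n(\mathbf{E}),\qquad A^{N_n-1}\cdot B=(2+\epsilon)\deg_X\bigl(s_{n-1}(\mathbf{E})\cdot H\bigr),
\]
with $H=c_1(\mathcal{O}_X(1))$. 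Next I would apply the Whitney formula~\eqref{eq:whitney}: since $\gcd(1,2,\dots,n)=1$,
\[
    s_\bullet(\mathbf{E})\;=\;\frac{1}{n!}\prod_{l=1}^{n}\frac{1}{l^{n-1}}\sum_{j_l\geq 0}\frac{s_{j_l}(\Omega_X(2l))}{l^{j_l}},
\]
and taking the degree-$n$ (resp. degree-$(n-1)$) component reduces the computation to sums over compositions of $n$ (resp. $n-1$) of products of ordinary Segre classes $s_{j_l}(\Omega_X(2l))$.

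I would then expand everything in $d$ using the standard formula $c(\Omega_X)=(1-H)^{n+2}/(1-dH)$ for a smooth hypersurface, the twist formula $c_j(\Omega_X(2l))=\sum_i\binom{n-j+i}{i}c_{j-i}(\Omega_X)(2lH)^i$, and $\deg_X H^n=d$. Every $\deg_X\bigl(\prod_l s_{j_l}(\Omega_X(2l))\cdot H^{n-\sum j_l}\bigr)$ is then a polynomial in $d$ of degree $1+\sum j_l\leq n+1$; combined with the factor of $(2+\epsilon)$ attached to the $B$-term, this immediately yields the factorization $P(n,d,\epsilon)=d\sum_{j=0}^{n}Q_j(n,\epsilon)d^{j}$ asserted in \textbf{(1)}. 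The coefficient of $d^{n+1}$ (i.e.\ $Q_n$) picks up contributions only from the highest-$d$ piece of $s(\Omega_X)=(1-dH)/(1-H)^{n+2}$; the subtracted $(2+\epsilon)$-term enters only through $s_{n-1}(\mathbf{E})$, which contributes to at most $d^n$, so the $\epsilon$-dependence is pushed into lower-order $Q_j$'s and $Q_n(n,\epsilon)$ depends only on $n$. A short combinatorial check on the resulting sum—analogous to the positivity arguments of \cite[\S4]{Cad16}—gives $Q_n(n,\epsilon)>0$.

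The main obstacle is the coefficient bound \textbf{(2)}. Both $A^{N_n}$ and $(2+\epsilon)\,N_n\,s_{n-1}(\mathbf{E})\cdot H$, after the above expansion, become finite sums indexed by compositions and twist exponents; each summand is a ratio whose numerator combines binomial coefficients $\binom{n+j}{j}$ (bounded by $n^{O(n)}$) and powers $(2l)^i$, and whose denominator is $n!\prod_l l^{n-1+j_l}$. Grouping the summands contributing to $Q_j$ and comparing against the explicit leading term $Q_n$, one has to show that the ratio $|Q_j|/Q_n$ does not exceed $(D_\epsilon n^4)^{n-j}$; the exponent $4$ corresponds roughly to a cubic-in-$n$ bound per lost $d$-degree from missing Segre factors, times an extra $n$ from the combinatorial index shift. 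The $\epsilon$-dependence enters only linearly via the factor $(2+\epsilon)$ in $B$ and (mildly) through the binomial twist terms, producing the explicit constant $D_\epsilon=\max(\tfrac{27}{2},\,9(1+\tfrac{\epsilon}{4}))$. Once \textbf{(2)} is established, statement \textbf{(3)} is a one-line consequence of the Fujiwara root bound: for $d>2D_\epsilon n^4$, $|Q_j| d^{j}<Q_n d^{n}\cdot 2^{-(n-j)}$ for each $j<n$, whence $P(n,d,\epsilon)>0$. Together with \eqref{eq:lowerboundhOh1} and Theorem~\ref{thm:HMI}, this gives the bigness of $\mathcal{O}_{X_n^{GG}}(1)\otimes p_n^*\mathcal{O}_X(-\epsilon)$.
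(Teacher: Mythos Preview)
Your outline is essentially correct for parts (1) and (3), and the overall architecture (Segre classes + Whitney formula + Fujiwara bound) matches the paper. The organizational choice, however, differs from the paper in a way that matters for (2).

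You absorb the twist $\mathcal{O}_X(2)$ into the weighted bundle, writing $A$ as the tautological class of $\mathbf{E}=\bigoplus_{l}\Omega_X(2l)^{(l)}$ and then applying Whitney to $s_\bullet(\mathbf{E}^*)$ (note the dual: by \eqref{eq:segreclass} the pushforward of the tautological class on $\mathbb{P}(\mathbf{E})$ yields $s_\bullet(\mathbf{E}^*)$, not $s_\bullet(\mathbf{E})$). The paper instead keeps the twist separate: it writes $A=u+2h$ with $u=c_1\mathcal{O}_{P_n}(1)$, expands $(u+2h)^{N_n}$ by Newton's binomial, and applies Whitney to the \emph{untwisted} bundle $\mathbf{T}_n=T_X^{(1)}\oplus\cdots\oplus T_X^{(n)}$. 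This gives the clean factorization
\[
(k!)^n\, s_\bullet(\mathbf{T}_n)=\prod_{l=1}^{n}\Bigl[\sum_{j}(-h/l)^j\Bigr]^{n+2}\cdot\prod_{l=1}^{n}\Bigl[1+\tfrac{hd}{l}\Bigr],
\]
in which the $d$-dependence is confined to the second product. The coefficients of $d^\alpha$ are then the elementary symmetric sums $C_\alpha=\sum_{l_1<\cdots<l_\alpha}1/(l_1\cdots l_\alpha)$, the $h$-only part gives combinatorial coefficients $B_\gamma$, and the Newton binomial contributes a third family $D_l$. The leading term is simply $Q_n=C_n=1/n!>0$, and the entire estimate (2) reduces to three elementary recursive bounds $B_{\alpha+1}\le 2n^2B_\alpha$, $C_\alpha\le\tfrac{3}{2}n^2C_{\alpha+1}$, $D_{\alpha+1}\le 9n^2D_\alpha$, whose product yields the factor $\tfrac{27}{2}n^4$ (and $9(1+\tfrac{\epsilon}{4})n^4$ for the first step). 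This is where the stated constant $D_\epsilon=\max(\tfrac{27}{2},\,9(1+\tfrac{\epsilon}{4}))$ comes from.

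Your route, by contrast, entangles the twist with the $d$-dependence inside each $s_{j_l}(\Omega_X(2l))$, so there is no obvious separation into a ``$d$-part'' $C_\alpha$ and a ``rest'' $B_\gamma$. As a result, your paragraph on (2) is only a heuristic (``the exponent $4$ corresponds roughly to a cubic-in-$n$ bound\dots''), not an argument, and it is not clear that the specific constant $D_\epsilon$ is reachable along your path without essentially reorganizing back to the paper's decomposition. Since the explicit bound in (2) is the whole point of the proposition, this is the genuine gap in your proposal.
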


Thus, if we fix first \(n \geq 2\), and then take \(\epsilon := 5n+3\) in Proposition~\ref{prop:mainresult}, we get the bound
\[
	d > 18(\frac{5n+3}{4} + 1)n^{4}.
\]

Let us simply take a monomial lower bound that ensures positivity for all \(n \geq 2\):

\begin{theorem}[=Theorem~\ref{thm:poly} {\em (1)}]
	For \(n\geq 2\) and \(d > \frac{153}{4} n^{5}\), the generic hypersurface of degree \(d\) in \(\mathbb{P}^{n+1}\) is quasi-hyperbolic.
\end{theorem}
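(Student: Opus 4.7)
The plan is simply to chain Theorem~\ref{thm:hypslanted} with Proposition~\ref{prop:mainresult} specialized to $k = n$ and $\epsilon = 5n+3$, and then verify that the clean monomial hypothesis $d > \tfrac{153}{4} n^{5}$ is strong enough to trigger the Fujiwara bound from Proposition~\ref{prop:mainresult}(3). All the real work has already been packaged into Proposition~\ref{prop:mainresult}; the present theorem is a quantitative unpacking, and the only new content is an elementary numerical inequality.

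Concretely, I would first invoke Theorem~\ref{thm:hypslanted}: to prove quasi-Brody hyperbolicity of the generic smooth hypersurface $X \subset \mathbb{P}^{n+1}$ of degree $d$, it suffices to establish, for every such $X$, the bigness of the $\mathbb{Q}$-line bundle $\mathcal{O}_{n,GG}(1) \otimes p_{n}^{\ast} \mathcal{O}_{X}(-(5n+3))$. This bigness is precisely the conclusion of Proposition~\ref{prop:mainresult}(3) applied with $k=n$ and $\epsilon = 5n+3$, provided the hypothesis $d > 2 D_{\epsilon} n^{4}$ is satisfied. Next I would compute $D_{\epsilon}$ explicitly for this choice of $\epsilon$: since $n \geq 2$ forces $\epsilon = 5n+3 \geq 13$, one has
\[
9\Bigl(1 + \tfrac{\epsilon}{4}\Bigr) \;=\; \tfrac{9(5n+7)}{4} \;\geq\; \tfrac{153}{4} \;>\; \tfrac{27}{2},
\]
so that $D_{\epsilon} = \tfrac{9(5n+7)}{4}$ and the sufficient condition on $d$ reads
\[
d \;>\; 2 D_{\epsilon}\, n^{4} \;=\; \tfrac{9(5n+7)}{2}\, n^{4} \;=\; \tfrac{45 n^{5} + 63 n^{4}}{2}.
\]

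The only remaining step is the elementary comparison $\tfrac{153}{4} n^{5} \geq \tfrac{45 n^{5} + 63 n^{4}}{2}$: clearing denominators, this reads $63 n^{5} \geq 126 n^{4}$, i.e.\ $n \geq 2$, which is our standing hypothesis (with equality at $n = 2$, confirming that the constant $\tfrac{153}{4}$ is tight for the base case and dictates the degree-five monomial bound). Hence any $d$ satisfying $d > \tfrac{153}{4} n^{5}$ automatically satisfies $d > 2 D_{\epsilon} n^{4}$, so Proposition~\ref{prop:mainresult}(3) yields the desired bigness and Theorem~\ref{thm:hypslanted} concludes. There is no genuine obstacle here: the theorem is a direct corollary, and the only point requiring attention is checking that the inequality survives at $n = 2$ rather than only holding asymptotically, which it does precisely because the constant $\tfrac{153}{4}$ was chosen to saturate the bound at $n = 2$.
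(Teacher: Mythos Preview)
Your proposal is correct and follows exactly the paper's approach: specialize Proposition~\ref{prop:mainresult}(3) to $k=n$, $\epsilon=5n+3$, then verify that $\tfrac{153}{4}n^{5}$ dominates $2D_{\epsilon}n^{4}=\tfrac{9(5n+7)}{2}n^{4}$ for all $n\geq 2$. You have simply spelled out the arithmetic (the reduction $63n^{5}\geq 126n^{4}\Leftrightarrow n\geq 2$ and the observation that equality at $n=2$ pins down the constant) that the paper leaves implicit behind the sentence ``let us simply take a monomial lower bound that ensures positivity for all $n\geq 2$.''
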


This proves the first item of Theorem~\ref{thm:poly}. The second one follows from the work of Riedl-Yang \cite{RY22}, who showed that if the first item has been proven for \(d \geq d_{n}\) using the jet differential techniques discussed here, then the Kobayashi conjecture must hold with the lower bound \(d_{n}' = d_{2n-1}\).

\section{Main computations} \label{sec:maincomputations}

In this section, we prove Proposition~\ref{prop:mainresult}. We will first drop the hypothesis \(k = n\) to perform the beginning of our computations; we will only resume this hypothesis after Step 4.
\medskip

\noindent
{\em Step 1. Expression of \(A^{N_{k}}\) and \(A^{N_{k}-1} \cdot B\) in terms of weighted Segre classes.} Recall that \(P_{k} = \mathbb{P}(\mathbf{\Omega}_{k})\). Let us denote by \(\mathcal{O}_{k}(1)\) its tautological \(\mathbb{Q}\)-line bundle. Dually, we may also write \(P_{k} = \mathrm{P}(\mathbf{T}_{k})\) with
\[
	\mathbf{T}_{k}
	:=
	T_{X}^{(1)}
	\oplus \dotsc \oplus
	T_{X}^{(k)}.
\]
Thus, one may expand the Newton binomial and use the definition of weighted Segre classes \eqref{eq:segreclass} to obtain the following (pullbacks to \(P_{k}\) are implied):
\begin{align} \nonumber
	A^{N_{k}} & = \int_{P_{k}} ( u + 2 h)^{N_{k}}
	\hspace{2cm} (u = c_{1}(\mathcal{O}_{k}(1)), h = c_{1}(\mathcal{O}_{X}(1))\,) \\ \label{eq:A}
	& = \sum_{l=0}^{n} 2^{l} \binom{N_{k}}{l} \int_{X} s_{n-l}(\mathbf{T}_{k}) h^{l},
\end{align}
On the other hand, we also have:

\begin{align} \nonumber
	A^{N_{k}-1} \cdot B & = \int_{P_{k}} ( u + 2 h)^{N_{k}-1} (2 + \epsilon) h \\ \label{eq:AB}
	& = \sum_{l=1}^{n} 2^{l-1}(2 + \epsilon) \binom{N_{k}-1}{l-1} \int_{X} s_{n-l}(\mathbf{T}_{k}) h^{l}.
\end{align}

\noindent
{\em Step 2. Application of the Whitney formula.} To compute the previous numbers, we will apply the Whitney formula \eqref{eq:whitney} to express the weighted Segre classes of \(\mathbf{T}_{k}\) in terms of the hyperplane class \(h\). In our context, since \(X \subset \mathbb{P}^{n+1}\) is a smooth degree \(d\) hypersurface, we have 
\[
	s_{\bullet}(T_{X}) 
	= s_{\bullet}(T_{\mathbb{P}^{n+1}}) c_{\bullet}(N_{X}) 
	= [ \sum_{j=0}^{n} (-h)^{j}]^{n+2} (1 + hd).
	\]
Thus the Whitney formula yields
\begin{equation} \label{eq:segre}
	s_{\bullet}(\mathbf{T}_{k})
	=
	\frac{1}{(k!)^{n}}
	\prod_{1 \leq l \leq k} 
	\left[	
	\sum_{j=0}^{n} (- \frac{h}{l})^{j}
	\right]^{n+2}
	\left[
		1 + \frac{hd}{l}
	\right]
\end{equation}

\noindent
{\em Step 3. Expressions of \eqref{eq:A} and \eqref{eq:AB} as polynomials in \(d\)}. Let us start by writing the coefficient \(\Lambda_{\alpha, \beta}\) of \(d^{\alpha} h^{\beta}\) in the expression \(s_{\beta}(\mathbf{T}_{k})\) for any integers \(\alpha, \beta\). Inspection of \eqref{eq:segre} shows that \(\Lambda_{\alpha, \beta} = 0\) unless \(\beta \geq \alpha\). If this holds, let us write \(\beta = \alpha + \gamma\) with \(\gamma \geq 0\), in which case one then has
\[
	(k!)^{n} \Lambda_{\alpha, \beta} = (-1)^{\gamma} B_{\gamma} C_{\alpha}
	\quad
	(\gamma = \beta - \alpha \geq 0)
\]
where \(B_{\gamma}\) is the coefficient of \(h^{\gamma}\) in \(\prod_{1 \leq l \leq k} (\sum_{j=0}^{n} \frac{h}{j})^{n+2}\), and \(C_{\alpha}\) is the coefficient of \(h^{\alpha}\) in \(\prod_{1 \leq l \leq k}(1 + \frac{h}{l})\).

\begin{proposition} We have the following formulas:
\begin{enumerate}
	\item \(B_{0} = 1\) and for all \(\gamma \geq 1\), one has
	\[
		B_{\gamma} := \sum_{\{u_{1} \leq \dotsc \leq u_{\gamma}\} \subset S_{k, n+2}}
	\frac{1}{u_{1} \dotsc u_{\gamma}}
	\]
	In this expression, the sum runs over all non-decreasing sequences \(u_{1}, \dotsc, u_{\gamma}\) in the ordered set
\[
	S_{k, n+2} := \{
		1_{1} <  \dotsc < 1_{n+2} < 2_{1} < \dotsc < 2_{n+2} < \dotsc < k_{1} < \dotsc < k_{n+2}
	\},
\]
and the product is computed by forgetting the indexes (see also \cite[p.77]{cad_thesis}). 
\item \(C_{0} = 1\) and for all \(\alpha \geq 1\), one has
	\[
		C_{\alpha} := \sum_{1 \leq l_{1} < \dotsc < l_{\alpha} \leq k} \frac{1}{l_{1} \dotsc l_{\alpha}}
	\]
\end{enumerate}
\end{proposition}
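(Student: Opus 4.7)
The plan is to prove both formulas by direct distributivity, expanding the defining products factor by factor and reading off the coefficient of each power of $h$.

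For part (2), which is quite elementary, I would simply expand $\prod_{1 \le l \le k}(1 + h/l)$: each term in the expansion corresponds to choosing, for every index $l \in \{1, \dots, k\}$, either the summand $1$ or the summand $h/l$. Grouping the terms according to the subset $\{l_1 < \dots < l_\alpha\} \subset \{1, \dots, k\}$ of indices where $h/l$ is selected yields immediately
$$\prod_{1 \le l \le k} \left(1 + \tfrac{h}{l}\right) = \sum_{\alpha=0}^k h^\alpha \sum_{1 \le l_1 < \dots < l_\alpha \le k} \frac{1}{l_1 \cdots l_\alpha},$$
which is the stated formula, with $C_0 = 1$ corresponding to the empty-subset term.

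For part (1), I would label the $k(n+2)$ factors appearing in $\prod_{1 \le l \le k}\bigl(\sum_{j=0}^n (h/l)^j\bigr)^{n+2}$ by the elements of the ordered set $S_{k,n+2}$: the factor indexed by $l_i$ is, by definition, $\sum_{j=0}^n (h/l)^j$. Expanding the product then amounts to choosing, for each $l_i \in S_{k,n+2}$, an exponent $j_{l,i} \in \{0, 1, \dots, n\}$, contributing the monomial $(h/l)^{j_{l,i}}$. Reading off the coefficient of $h^\gamma$ gives
$$B_\gamma = \sum_{\substack{(j_{l,i})_{l_i \in S_{k,n+2}} \\ \sum j_{l,i} = \gamma,\; 0 \le j_{l,i} \le n}} \prod_{l_i \in S_{k,n+2}} \frac{1}{l^{j_{l,i}}}.$$
In the only range that matters for Segre classes on an $n$-dimensional variety, namely $\gamma \le n$, the constraint $j_{l,i} \le n$ is automatic, so such a tuple is exactly the same data as a multiset of size $\gamma$ in $S_{k,n+2}$ — equivalently a non-decreasing sequence $u_1 \le \dots \le u_\gamma$ in $S_{k,n+2}$ obtained by listing each element $l_i$ exactly $j_{l,i}$ times in increasing order. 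Under this bijection, $\prod_{l_i} l^{-j_{l,i}}$ rewrites as $\prod_{j=1}^\gamma u_j^{-1}$ with the convention that each $u_j$ is read as the underlying integer once its second index is forgotten, yielding the stated formula; the case $B_0 = 1$ corresponds to the empty sequence.

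Neither step presents any genuine obstacle; the main care required lies in the combinatorial bookkeeping for $B_\gamma$, namely the translation between tuples $(j_{l,i})$ of exponents and non-decreasing sequences (multisets) in the ordered set $S_{k,n+2}$, which is a standard stars-and-bars bijection.
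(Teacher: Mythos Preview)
Your proof is correct and follows essentially the same route as the paper: expand the defining products by distributivity, index the factors of the $(n+2)$-fold powers by the elements of $S_{k,n+2}$, and set up the bijection between exponent tuples summing to $\gamma$ and non-decreasing length-$\gamma$ sequences in $S_{k,n+2}$. You are in fact slightly more careful than the paper in noting that the truncation $j_{l,i}\le n$ in each geometric-series factor is only irrelevant in the range $\gamma\le n$ (which is all that is needed for the application on an $n$-dimensional variety); the paper's proof silently drops this constraint.
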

\begin{proof}
	Only the first item needs explaining. If \(\gamma \geq 1\), the definition of \(B_{\gamma}\) gives
	\begin{align*}
		B_{\gamma} = \sum_{p_{1, 1} + \dotsc p_{1, n+2} + \dotsc + p_{k, 1} + \dotsc p_{k, n+2}=\gamma} 
		\frac{1}
		{1^{p_{1, 1}} \dotsc 1^{p_{1, n+2}} 2^{p_{2, 1}} \dotsc 2^{p_{2, n+2}} \dotsc k^{p_{k, 1}} \dotsc k^{p_{k, n+2}}}
	\end{align*}

	Now, there is a bijection between the set of \(k(n+2)\)-uples of integers \((p_{l, j})_{1 \leq l \leq k, 1 \leq j \leq n+2}\) summing to \(\gamma\), and the set of sequences \(\{ u_{1} \leq \dotsc \leq u_{\gamma}\} \subset S_{k, n+2}\): to any such tuple, we associate the sequence obtained by taking each element \(l_{j} \in S_{k, n+2}\) repeated \(p_{l, j}\) times. Expressing the previous sum in terms of the sequences \((u_{i})_{1 \leq i \leq \gamma}\) instead gives the requested expression.
\end{proof}

We may now rewrite the two intersection numbers as polynomials in \(d\):
	\begin{align*}
		(k!)^{n} A^{N_{k}} & =
			\sum_{l = 0}^{n} 2^{l} \binom{N_{k}}{l} \left( \sum_{0 \leq \alpha \leq n - l} \Lambda_{\alpha, n-l} d^{\alpha} \right) \cdot \left(\int_{X} h^{n}\right) \\
		& = \sum_{l = 0}^{n} 2^{l} \binom{N_{k}}{l} \left( \sum_{\alpha + \gamma = n - l} (-1)^{\gamma} B_{\gamma} C_{\alpha} d^{\alpha}\right) \cdot \left(\int_{X} h^{n}\right) \\
		& = d \sum_{\alpha = 0}^{n} \left[ 
		\sum_{l = 0}^{n- \alpha} \binom{N_{k}}{l} 2^{l} (-1)^{n-\alpha -l} B_{n-\alpha -l} 
		\right] 
		C_{\alpha}
		d^{\alpha}
		\quad
		\left( \text{since} \int_{X} h^{n} = d \right)
	\end{align*}

	Similarly, one has
	\begin{align*}
		(k!)^{n} A^{N_{k}-1} \cdot B & = \sum_{l= 1}^{n} \binom{N_{k} - 1}{l-1} 2^{l-1}(2 + \epsilon) \left( \sum_{0 \leq \alpha \leq n - l} \Lambda_{\alpha, n-l} d^{\alpha} \right) h^{l} \\
		& = \sum_{l=1}^{n} \binom{N_{k}-1}{l-1} 2^{l-1}(2 + \epsilon) \left[ \sum_{\alpha + \gamma = n-l} (-1)^{\gamma} B_{\gamma} C_{\alpha} d^{\alpha}\right] \cdot (\int_{X} h^{n}) \\
		& = d \sum_{\alpha=0}^{n} \left[\sum_{l=1}^{n-\alpha} \binom{N_{k}-1}{l-1} 2^{l-1}(2+\epsilon) (-1)^{n-l-\alpha} B_{n-\alpha-l} \right] C_{\alpha} d^{\alpha}
	\end{align*}

This shows that:
\[
	A^{N_{k}} - N_{k} A^{N_{k} -1} \cdot B
	=
	d \sum_{\alpha=0}^{n} Q_{\alpha}(n, \epsilon) d^{\alpha}
\]
with
\begin{align*}
	Q_{\alpha}(n, \epsilon)
	=
	C_{\alpha} \left[
	\sum_{l=0}^{n-\alpha} \binom{N_{k}}{l} 2^{l} (-1)^{n-\alpha-l} B_{n-\alpha-l}
	-
	\sum_{l=1}^{n - \alpha} N_{k} \binom{N_{k} -1}{l-1} 2^{l-1} (2 + \epsilon) (-1)^{n-\alpha-l} B_{n-\alpha-l} \right] \\
\end{align*}

Let us rewrite this by singling out the term \(l=0\) and merging the two sums using the formula \(a \binom{b}{a} = b \binom{b-1}{a-1}\):
\begin{align*}
	Q_{\alpha}(n, \epsilon)
	=
	(-1)^{n-\alpha} C_{\alpha}  \left[
	B_{n-\alpha} +
	\sum_{l=1}^{n-\alpha} ( 2 - (2 + \epsilon)l) \binom{N_{k}}{l} (-1)^{l} 2^{l-1} B_{n-\alpha-l} 
	\right]
\end{align*}

\noindent
{\em Step 4. Bounds on the coefficients.} We now fix \(k = n\). To obtain the bound \eqref{eq:fujiwarabound}, we are simply going to drop all the signs in the expression of \(Q_{\alpha}\) to define
\[
	R_{\alpha}
	=
	C_{\alpha}  \left[
	B_{n-\alpha} +
	\sum_{l=1}^{n-\alpha} ( 2 + (2 + \epsilon)l) \binom{N_{k}}{l} 2^{l-1} B_{n-\alpha-l} 
	\right]
\]
and then use the very coarse inequality \(| Q_{\alpha} | \leq R_{\alpha}\). We may rewrite \(R_{\alpha}\) as
\begin{equation} \label{eq:exprRa}
	R_{\alpha}
	=
	C_{\alpha}
	\left[
	B_{n-\alpha}
	+ \sum_{l=1}^{n-\alpha} D_{l} B_{n-\alpha - l}
	\right]
\end{equation}
with
\[
	D_{l}
	=
	( 2 + (2 + \epsilon)l) \binom{N_{k}}{l} 2^{l-1}
\]

The main estimates on the \(R_{\alpha}\) will come from \eqref{eq:exprRa} and the following inequalities:

\begin{lemma} We have the following upper bounds: 
	\begin{enumerate}
		\item \(B_{\alpha+1} \leq 2 n^{2} \, B_{\alpha}\)
		\item \(C_{\alpha} \leq \frac{3}{2} n^{2}\,  C_{\alpha + 1}\) if \(\alpha + 1 \leq n\)
		\item 	\(D_{\alpha +1} \leq 9 n^{2} D_{\alpha}\) for \(\alpha \geq 1\).
	\end{enumerate}
\end{lemma}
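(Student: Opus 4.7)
My plan is to handle each estimate by elementary manipulation of the defining sums, with the crude bound \(H_n \leq n\) on the harmonic number taking care of everything in the end.

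For part (1), I would isolate the largest element \(v = u_{\alpha+1}\) in the non-decreasing sequence and bound the inner sum by \(B_\alpha\):
\[
B_{\alpha+1} \;\leq\; B_\alpha \sum_{v \in S_{n,n+2}} \frac{1}{v} \;=\; (n+2)\, H_n\, B_\alpha.
\]
Then \((n+2) H_n \leq n(n+2) \leq 2n^2\) whenever \(n \geq 2\).

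For part (2), my first step would be to establish the identity
\[
(\alpha+1)\, C_{\alpha+1} \;=\; H_n\, C_\alpha \;-\; \sum_{\substack{S \subset [n]\\ |S| = \alpha}} \frac{\sigma(S)}{\prod_{l \in S} l}, \qquad \sigma(S) := \sum_{l \in S} \frac{1}{l},
\]
obtained by decomposing each \((\alpha+1)\)-subset \(T\) of \([n]\) as \(S \sqcup \{i\}\) and resumming over the location of \(i\). The key inequality \(l_i \geq i\) for any \(S = \{l_1 < \dots < l_\alpha\}\) forces \(\sigma(S) \leq H_\alpha\), so
\[
(\alpha+1)\, C_{\alpha+1} \;\geq\; (H_n - H_\alpha)\, C_\alpha \;\geq\; \frac{n-\alpha}{n}\, C_\alpha,
\]
using the trivial estimate \(H_n - H_\alpha = \sum_{l=\alpha+1}^n \frac{1}{l} \geq (n-\alpha)/n\). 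Inverting gives \(C_\alpha / C_{\alpha+1} \leq n(\alpha+1)/(n-\alpha)\); for \(\alpha \leq n-1\) the ratio \((\alpha+1)/(n-\alpha)\) is maximised at \(\alpha = n-1\) with value \(n\), so I conclude \(C_\alpha / C_{\alpha+1} \leq n^2 \leq \frac{3}{2} n^2\).

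For part (3), direct expansion yields
\[
\frac{D_{\alpha+1}}{D_\alpha} \;=\; \left(1 + \frac{2+\epsilon}{2 + (2+\epsilon)\alpha}\right) \cdot \frac{2(N_n - \alpha)}{\alpha + 1}.
\]
For \(\alpha \geq 1\), the first factor is at most \(1 + \frac{2+\epsilon}{4+\epsilon} < 2\), while the second factor is decreasing in \(\alpha\) and hence bounded by its value at \(\alpha = 1\), namely \(N_n - 1 = n^2 + n - 2\). The product therefore stays below \(2(n^2 + n - 2) \leq 3 n^2 \leq 9 n^2\) for \(n \geq 2\).

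The only step demanding any real thought is the identity in (2) together with the observation \(\sigma(S) \leq H_\alpha\); the other two items amount to careful bookkeeping. The generous constants \(2\), \(3/2\) and \(9\) appearing in the statement leave plenty of slack for these crude estimates.
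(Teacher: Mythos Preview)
Your proof is correct and follows essentially the same approach as the paper. In part~(2) you take a slightly longer route---writing out the identity with \(H_n\) and \(\sigma(S)\) and invoking \(\sigma(S)\leq H_\alpha\)---whereas the paper simply bounds \(\sum_{l\notin S}\tfrac{1}{l}\geq \tfrac{n-\alpha}{n}\) directly; both arguments arrive at the same inequality \(C_{\alpha+1}\geq \tfrac{n-\alpha}{n(\alpha+1)}C_\alpha\), and your part~(3) is in fact a bit sharper than the paper's (yielding \(3n^2\) rather than \(9n^2\)).
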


\begin{proof}[Proof of the lemma]
	{\em (1).} One has
	\begin{align*}
		B_{\alpha + 1} & =  \sum_{\{u_{1} \leq \dotsc \leq u_{\alpha} \leq u_{\alpha +1}\} \subset S_{n, n+2}} \frac{1}{u_{1} \dotsc u_{\alpha}u_{\alpha +1}} \\
			       & \leq \sum_{\{u_{1} \leq \dotsc \leq u_{\alpha}\} \subset S_{n, n+2}} \sum_{u_{\alpha+1} \in S_{n, n+2}} \frac{1}{u_{1} \dotsc u_{\gamma}} \frac{1}{u_{\alpha+1}} \\ 
			       & \leq n(n+2)  \sum_{\{u_{1} \leq \dotsc \leq u_{\alpha}\} \subset S_{n, n+2}} \frac{1}{u_{1} \dotsc u_{\gamma}} \\
			       & \leq n(n+2) B_{\alpha} \leq 2 n^{2} B_{\alpha}.
	\end{align*}
	To obtain the last inequality, recall that \(n \geq 2\).
	\medskip

	{\em (2).} One has, if \(\alpha + 1 \leq n\):
	\begin{align*}
		C_{\alpha + 1} & = \sum_{1 \leq l_{1} < \dotsc < l_{\alpha} < l_{\alpha +1} \leq n} \frac{1}{l_{1} \dotsc l_{\alpha} l_{\alpha + 1}} \\
				&  = \frac{1}{\alpha + 1} \sum_{S \subset \llbracket 1, n\rrbracket, |S|= \alpha}\; \sum_{l \not\in S} \frac{1}{\mathrm{prod}(S)} \frac{1}{l} \\
				& \geq \frac{1}{n+1} \sum_{S \subset \llbracket 1, n\rrbracket, |S|= \alpha}\; \sum_{l \not\in S} \frac{1}{\mathrm{prod}(S)} \frac{1}{n} \\
				& = \frac{n - \alpha}{n(n+1)} C_{\alpha} \geq \frac{1}{n(n+1)} C_{\alpha} \geq \frac{2}{3n^{2}} C_{\alpha}.
	\end{align*}
	\medskip

	{\em (3).} One has
	\begin{align*}
		D_{\alpha + 1} & = ( 2 + (2 + \epsilon)(\alpha + 1)) \binom{N_{n}}{\alpha + 1} 2^{\alpha}
		\\
		& \leq 6 N_{n} ( 2 + (2 + \epsilon)\alpha) \binom{N_{n}}{\alpha} 2^{\alpha - 1} = 6 N_{n} D_{\alpha}
	\end{align*}
	where we used \(\binom{N_{n}}{\alpha+1} = \frac{N_{n}-\alpha}{\alpha+1} \binom{N_{n}}{\alpha} \leq N_{n} \binom{N_{n}}{\alpha}\) and
	\[
		\frac{2 + (2 + \epsilon) (\alpha + 1)}{2 + (2 + \epsilon) \alpha}
		\leq
		\frac{2 + (2 + \epsilon) (\alpha + 1)}{(2 + \epsilon) \alpha}
		\leq
		1 + \frac{\alpha + 1}{\alpha} \leq 3.
	\]
	Note that \(N_{n} = n + n^{2} - 1 \leq \frac{3}{2}n^{2}\), so finally we find
	\[
		D_{\alpha + 1}  \leq 9 n^{2} D_{\alpha}.
	\]
\end{proof}

Finally, we obtain:
\begin{lemma}
	There exists a constant \(D\) independent of \(n, d, \epsilon\), such that
	\begin{enumerate}
		\item \(R_{n-1} \leq 9 n^{4}\, (1 + \frac{\epsilon}{4}) R_{n}\)
					\item for all \(1 \leq \alpha \leq n-1\), we have
			\[
				R_{\alpha-1} \leq \frac{27}{2} n^{4} R_{\alpha}.
			\]
	\end{enumerate}
\end{lemma}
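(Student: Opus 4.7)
The plan is to factor the ratio $R_{\alpha-1}/R_{\alpha}$ as $(C_{\alpha-1}/C_{\alpha}) \cdot (T_{\alpha-1}/T_{\alpha})$, where I set $T_\alpha := B_{n-\alpha} + \sum_{l=1}^{n-\alpha} D_l B_{n-\alpha-l}$ so that $R_\alpha = C_\alpha T_\alpha$, and then to control each factor by a direct application of the three estimates from the previous lemma, together with one sharpening of the $C$-bound.

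For item (1), corresponding to $m := n - \alpha = 0$, the expressions collapse to $T_n = 1$ and $T_{n-1} = B_1 + D_1$. I would use the exact value $C_{n-1}/C_n = n(n+1)/2$ (a one-line computation from $C_n = 1/n!$ and $C_{n-1} = \sum_{j=1}^n j/n!$), together with the crude bounds $B_1 \leq 2n^2$ and $D_1 = (4+\epsilon) N_n \leq \tfrac{3}{2}(4+\epsilon)n^2$ (using $N_n \leq \tfrac{3}{2} n^2$). Plugging these in, the required estimate $R_{n-1} \leq 9n^4(1 + \epsilon/4) R_n$ reduces to the polynomial inequality $(n+1)(16+3\epsilon) \leq 9n(4+\epsilon)$, equivalently $4(5n-4) + 3(2n-1)\epsilon \geq 0$, which is obvious for $n \geq 2$ and $\epsilon > 0$.

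For item (2), I fix $1 \leq \alpha \leq n-1$, set $m := n - \alpha \geq 1$, and prove $T_{\alpha-1} \leq 11 n^2 T_\alpha$ as follows: split $T_{\alpha-1} = B_{m+1} + D_1 B_m + \sum_{l=2}^{m+1} D_l B_{m+1-l}$; bound $B_{m+1} \leq 2n^2 B_m$ using the previous lemma item (1); bound $D_1 B_m \leq 2n^2 \cdot D_1 B_{m-1}$ by the same estimate (valid because $m \geq 1$), noting that $D_1 B_{m-1}$ is the $l = 1$ summand of $T_\alpha$; and after the re-indexing $l \mapsto l+1$, apply item (3) of the previous lemma to obtain $\sum_{l=1}^m D_{l+1} B_{m-l} \leq 9n^2 \sum_{l=1}^m D_l B_{m-l}$. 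Adding up gives $T_{\alpha-1} \leq 2n^2 B_m + 11 n^2 \sum_{l=1}^m D_l B_{m-l} \leq 11 n^2 T_\alpha$. For the $C$-ratio, I revisit the proof of item (2) of the previous lemma: its intermediate estimate $C_\alpha \geq \frac{n-\alpha+1}{n(n+1)} C_{\alpha-1}$, combined with $n - \alpha + 1 \geq 2$, gives the sharpening $C_{\alpha-1}/C_\alpha \leq n(n+1)/2$. Multiplying, $R_{\alpha-1} \leq \frac{11 n^3(n+1)}{2} R_\alpha \leq \tfrac{27}{2} n^4 R_\alpha$, since $11(n+1) \leq 27n$ for all $n \geq 1$.

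The main obstacle is recognising that the generic bound $C_{\alpha-1}/C_\alpha \leq \tfrac{3}{2} n^2$ recorded in the previous lemma is too loose to yield either of the stated constants: applied directly it would produce $\tfrac{33}{2} n^4$ in item (2), and fail by a similar margin in item (1). One must replace it by the tighter $n(n+1)/2$, which amounts to retaining the factor $\frac{1}{n - \alpha + 1}$ that was discarded when the lemma was stated. Once this refinement is in place, the rest is routine bookkeeping with the three basic estimates.
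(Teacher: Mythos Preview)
Your argument is correct, and in fact tighter than the paper's. The two proofs share the same skeleton---write $R_\alpha = C_\alpha T_\alpha$ and bound the ratios $C_{\alpha-1}/C_\alpha$ and $T_{\alpha-1}/T_\alpha$ separately---but differ in two places.

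For the $T$-factor in item~(2), the paper peels off only the last term $D_{n-\alpha+1}$ and shifts every $B$-index down by one via $B_{j+1}\leq 2n^2 B_j$, whereas you peel off the first two terms $B_{m+1}$ and $D_1 B_m$ (shifting their $B$-indices down) and then shift every remaining $D$-index up via $D_{l+1}\leq 9n^2 D_l$. Both routes land on $T_{\alpha-1}\leq 11n^2 T_\alpha$.

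The more substantive difference is the $C$-factor. The paper uses only the recorded bound $C_{\alpha-1}\leq \tfrac{3}{2}n^2 C_\alpha$, while you go back into its proof and retain the stronger intermediate estimate $C_{\alpha-1}\leq \tfrac{n(n+1)}{n-\alpha+1}\,C_\alpha \leq \tfrac{n(n+1)}{2}\,C_\alpha$ (using $\alpha\leq n-1$), together with the exact value $C_{n-1}/C_n=n(n+1)/2$ for item~(1). Your observation that the cruder $\tfrac{3}{2}n^2$ bound cannot yield the stated constants is accurate: combined with $T_{\alpha-1}\leq 11n^2 T_\alpha$ it gives only $\tfrac{33}{2}n^4$ in item~(2) and $12n^4+\tfrac{9\epsilon}{4}n^4$ in item~(1). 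The paper's displayed chain of equalities contains an arithmetic slip (the $l=n-\alpha$ summand is silently dropped when passing from $\tfrac{3}{2}n^2\cdot 2n^2\sum_{l=1}^{n-\alpha}$ to $\tfrac{27}{2}n^4\cdot\tfrac{2}{9}\sum_{l=1}^{n-\alpha-1}$), which is what hides this gap. Your sharpening of the $C$-bound is exactly what is needed to make the stated constants go through.
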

\begin{proof}
\noindent
{\em (1)} One has
\[
	R_{n} = C_{n} B_{0} = C_{n} > 0
\]
and
\begin{align*}
	R_{n-1} & = C_{n-1} (B_{1} + D_{1} B_{0}) \\
		& = C_{n-1} (B_{1} + (4 + \epsilon)N_{n}) \\
		& \leq \frac{3}{2} n^{2}\, C_{n} \cdot (2n^{2} + (4 + \epsilon)\frac{3}{2} n^{2}) 
		=  9 n^{4}(1 + \frac{\epsilon}{4}) R_{n}.
\end{align*}

\noindent
{\em (2)} One has
\begin{align*}
	R_{\alpha-1} & = C_{\alpha-1}
	\left[
	B_{n-\alpha + 1}
	+ \sum_{l=1}^{n-\alpha+1} D_{l} B_{n-\alpha + 1 - l}
	\right] \\
	& = C_{\alpha-1}
	\left[
	B_{n-\alpha + 1}
	+ \sum_{l=1}^{n-\alpha} D_{l} B_{n-\alpha + 1 - l}
	+ D_{n-\alpha+1}
	\right] \\
	& \leq \frac{3}{2} n^{2}\, C_{\alpha}
	\left[
	2 n^{2}\, B_{n-\alpha}
	+ 2 n^{2}\, \sum_{l=1}^{n-\alpha} D_{l} B_{n-\alpha - l}
	+ 9 n^{2} D_{n-\alpha} 
	\right] \\
	& = \frac{27}{2} n^{4} C_{\alpha} 
	\left[
		\frac{2}{9} B_{n-\alpha}
	+ \frac{2}{9} \sum_{l=1}^{n-\alpha-1} D_{l} B_{n-\alpha - l}
	+  D_{n-\alpha} 
	\right] \\
	& \leq
	\frac{27}{2} n^{4} R_{\alpha}.
\end{align*}
Again, one gets the result.
\end{proof}

This now proves item {\em (2)} of Proposition~\ref{prop:mainresult}: the previous lemma shows inductively that
\[
	R_{\alpha} \leq D_{\epsilon}^{n-j} n^{4(n-j)} R_{n},
\]
with \(D_{\epsilon} = \max(9(1 + \frac{\epsilon}{4}), \frac{27}{4})\).
To obtain the result, just use the fact that \(Q_{n} = C_{n} = R_{n} >0\) and the inequalities \(|Q_{\alpha}| \leq R_{\alpha}\).
\medskip

\noindent
{\em Step 5. Fujiwara bound.} Recall the following elementary result:

\begin{lemma} \label{lem:fujiwara}
	Let \(Q(t) = a_{n} t^{n} + a_{n-1} t^{n-1} + \cdots + a_{0}\) be a polynomial with real coefficients such that \(a_{n} > 0\). Assume there exists a constant \(M > 0\) such that
	\begin{equation} \label{eq:fujiwara}
		|a_{n-j}| \leq M^{j} a_{n}
	\end{equation}
	for all \(j > 0\). Then for \(t > 2M\), one has \(Q(t) >0\).
\end{lemma}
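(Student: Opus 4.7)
The plan is to bound $Q(t)$ from below by isolating the leading term and controlling the remainder by a geometric series. Concretely, for $t>0$ I would write
\[
Q(t) \;=\; a_n t^n + \sum_{j=1}^{n} a_{n-j} t^{n-j} \;\geq\; a_n t^n - \sum_{j=1}^{n} |a_{n-j}|\, t^{n-j},
\]
and then use the hypothesis $|a_{n-j}| \leq M^j a_n$ to obtain
\[
Q(t) \;\geq\; a_n t^n - a_n \sum_{j=1}^{n} M^{j}\, t^{n-j} \;=\; a_n t^n \left( 1 - \sum_{j=1}^{n} \left(\tfrac{M}{t}\right)^{j} \right).
\]

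Since $a_n > 0$, it suffices to show that the parenthesized factor is positive as soon as $t > 2M$. For such $t$ one has $M/t < 1/2$, so by comparison with the full geometric series,
\[
\sum_{j=1}^{n} \left(\tfrac{M}{t}\right)^{j} \;\leq\; \sum_{j=1}^{\infty} \left(\tfrac{M}{t}\right)^{j} \;=\; \frac{M/t}{1 - M/t} \;<\; \frac{1/2}{1 - 1/2} \;=\; 1,
\]
which yields $Q(t) > 0$, as required.

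There is no serious obstacle: the argument is entirely elementary and amounts to the triangle inequality plus the convergence of a geometric series with common ratio strictly less than $1/2$. The only point worth flagging is that the strict inequality $t > 2M$ (rather than $t \geq 2M$) is what guarantees the strict bound $M/t < 1/2$, and hence the strict positivity of $Q(t)$; this matches exactly the way the lemma is invoked in item \emph{(3)} of Proposition~\ref{prop:mainresult}, where the Fujiwara constant is $M = D_\epsilon n^4$ and the threshold $d > 2 D_\epsilon n^4$ is precisely $t > 2M$.
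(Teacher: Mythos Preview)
Your proof is correct. The paper does not actually give a proof of this lemma---it merely states it as an ``elementary result''---and your argument is precisely the standard one implicit in that description: isolate the leading term, bound the tail by a geometric series with ratio $M/t < 1/2$, and conclude.
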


In our situation, the polynomial \(Q\) is given by
\[
	Q(t) = \sum_{j=1}^{n} Q_{j}(n, \epsilon) t^{j}, 
\]
(see \eqref{eq:polynomial}), where \(n, \epsilon\) are seen as constants. Then by item {\em (2)} of Proposition~\ref{prop:mainresult}, one has the bound \eqref{eq:fujiwara} with
\[
	M = D_{\epsilon} n^{4}.
\]
This gives the result.
\vfill

\section{Annex. Intersection theory on weighted projective bundles} \label{sec:annex}

The purpose of this annex is mainly to give an elementary proof of the Whitney formula~\eqref{eq:whitney}, as an equality between numerical classes (which is enough to perform the computations presented in this article). The route we follow will be closely related to the original computations of Green-Griffiths \cite{GG80} (i.e. estimating Euler characteristics by integrals of polynomials on adequate simplexes) -- we also used this kind of computations in the work \cite{Cad19}.
\medskip

Let \(X\) be a complex projective manifold, endowed with a weighted vector bundle \(\mathbf{E} := E_{1}^{(a_{1})} \oplus \dotsc \oplus E_{r}^{(a_{r})}\) (the reader can refer to Section~\ref{sec:weighted} for the terminology). Then the Whitney formula that was proved in the author's thesis can be stated as follows:

\begin{proposition} [{\cite[Proposition~3.2.11]{cad_thesis}}]
We have the following equality of endomorphisms of \((A_{\ast} X)_{\mathbb{Q}}\):
\begin{equation} \label{eq:whitney}
	s_{\bullet}(E_{1}^{(a_{1})} \oplus \dotsc \oplus E_{s}^{(a_{s})})
	=
	\frac{\mathrm{gcd}(a_{1}, \dotsc, a_{s})}{a_{1} \dotsc a_{s}}
	\prod_{j} s_{\bullet}(E_{j}^{(a_{j})})
\end{equation}
where
	\(
	s_{\bullet}(E^{(a)})
	=
	\frac{1}{a^{\mathrm{rk} E - 1}} \sum_{l} \frac{s_{l}(E)}{a^{l}}.
	\)
	for a single vector bundle \(E\) and any integer \(a > 0\).
	\medskip
\end{proposition}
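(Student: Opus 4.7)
The plan is to verify \eqref{eq:whitney} numerically by matching both sides as leading coefficients of an asymptotic Euler characteristic, in the spirit of the original Green-Griffiths computations. Since the claimed identity is a polynomial formula in the Chern classes of the \(E_{i}\), the splitting principle (pullback along relative flag bundles is injective on rational Chow) reduces to the case where each \(E_{i}\) is a direct sum of line bundles. An elementary manipulation using the tautological single-weight formula \(s_\bullet(E^{(a)}) = \frac{1}{a^{\mathrm{rk}\,E-1}}\sum_{l} s_{l}(E)/a^{l}\) then identifies both sides of \eqref{eq:whitney} for \(\mathbf{E} = \bigoplus_{i} E_{i}^{(a_{i})}\) with the corresponding sides for the refined decomposition \(\bigoplus_{i,j} L_{ij}^{(a_{i})}\). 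So I may assume
\[
\mathbf{E} = L_{1}^{(a_{1})} \oplus \dotsb \oplus L_{s}^{(a_{s})}, \qquad \xi_{i} := c_{1}(L_{i}), \qquad g := \gcd(a_{1},\dotsc,a_{s}),
\]
and verify the identity degree-by-degree by integrating over a smooth projective \(X\) of dimension \(n\).

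The core computation goes through the asymptotic
\[
\int_{X} s_{n}(\mathbf{E}) \;=\; \lim_{\substack{M \to \infty \\ g\mid M}} \frac{(n+s-1)!}{M^{n+s-1}}\, \chi(X, S^{M} \mathbf{E}^{\ast}),
\]
which follows from the volume formula applied to the tautological \(\mathbb{Q}\)-line bundle on \(\mathbb{P}(\mathbf{E}^{\ast})\). Expanding \(S^{M} \mathbf{E}^{\ast} = \bigoplus_{\sum a_{i} l_{i} = M} L_{1}^{-l_{1}} \otimes \dotsb \otimes L_{s}^{-l_{s}}\) and applying asymptotic Riemann-Roch to each summand gives
\[
\chi(X, S^{M} \mathbf{E}^{\ast}) \;=\; \sum_{\mathbf{l}:\, \sum a_{i} l_{i} = M} \frac{\bigl(-\sum_{i} l_{i} \xi_{i}\bigr)^{n}}{n!} \;+\; O(M^{n+s-2}).
\]

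Next I would use the lattice-point asymptotic
\[
\#\bigl\{\mathbf{l}\in\mathbb{Z}_{\geq 0}^{s} : \textstyle\sum a_{i} l_{i} = M\bigr\} \;\sim\; \frac{g}{a_{1}\dotsb a_{s}}\,\frac{M^{s-1}}{(s-1)!} \qquad(g\mid M),
\]
obtained by analyzing the order-\(s\) poles of the generating function \(\prod_{i}(1-z^{a_{i}})^{-1}\) at the \(g\)-th roots of unity, to convert the above sum into an integral over the standard simplex via the substitution \(w_{i} = a_{i} l_{i} / M\). The Dirichlet formula \(\int_{\Delta} \prod w_{i}^{n_{i}}\,dw = \prod_i n_{i}!/(n+s-1)!\) then yields
\[
\int_{X} s_{n}(\mathbf{E}) \;=\; \frac{g\,(-1)^{n}}{a_{1}\dotsb a_{s}}\, h_{n}\!\bigl(\tfrac{\xi_{1}}{a_{1}},\dotsc,\tfrac{\xi_{s}}{a_{s}}\bigr),
\]
with \(h_{n}\) the complete homogeneous symmetric polynomial of degree \(n\). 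A direct expansion shows this coincides with the degree-\(n\) part of \(\frac{g}{a_{1}\dotsb a_{s}}\prod_{i}(1+\xi_{i}/a_{i})^{-1}\), which is precisely the right-hand side of \eqref{eq:whitney} in the rank-one case.

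The main obstacle is pinning down the correct lattice density on the hyperplane \(\sum a_{i} l_{i} = M\): the factor \(g = \gcd(a_{i})\) in \eqref{eq:whitney} arises precisely from the congruence \(\sum a_{i} l_{i} \equiv 0 \pmod{g}\), which cuts out a sublattice of index \(a_{s}/g\) in \(\mathbb{Z}^{s-1}\) under any choice of \(s-1\) free coordinates, producing the density \(g/\prod a_{i}\) that survives all the way to the final formula. Once this density is set up correctly, everything else is routine Dirichlet integration, with asymptotic Riemann-Roch and Euler-Maclaurin each contributing only an \(O(M^{n+s-2})\) error that is harmlessly absorbed in the leading-coefficient comparison.
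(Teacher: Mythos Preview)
Your proposal is correct and follows essentially the same route as the paper's annex: reduce to line bundles via the splitting principle, identify the top Segre class as the leading coefficient of \(\chi(X, S^{M}\mathbf{E}^{\ast})\) via asymptotic Riemann--Roch, and evaluate the resulting lattice sum as an integral of monomials over the weighted simplex \(\{\sum a_{i} t_{i} = 1\}\). The only cosmetic difference is in how you extract the density \(g/\prod_{i} a_{i}\): you read it off from the poles of the generating function \(\prod_{i}(1-z^{a_{i}})^{-1}\), whereas the paper obtains it by computing the \((s-1)\)-volume of a fundamental domain of the lattice \(H=\{\sum a_{i} t_{i}=0\}\subset\mathbb{Z}^{s}\) and comparing it to the volume of \(\Delta_{\underline{a}}\).
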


The proof presented in \cite{cad_thesis} essentially copies Fulton's presentation in \cite{ful98}. As announced above, we will show that this formula holds at least after quotienting by the numerical equivalence relation -- the idea will be to see the top Segre class as the leading coefficient in the asymptotic expansion of the Euler characteristic of the symmetric products of \(\mathbf{E}^{\ast}\). As the reader will see, this type of computation is very close in spirit to the ones of \cite{GG80}. We will need several combinatorial lemmas, all gathered in Section~\ref{sec:combinatorial} below.

\begin{proof}[Proof of the identity as endomorphisms of \((A_{\ast} X)_{\mathbb{Q}, \mathrm{num}}\).] We will prove that for any integer \(k \in \llbracket 1, n \rrbracket\) and any cycle \(\alpha \in A_{k} X\), one has for \(1 \leq l \leq k\):
	\[
	s_{l}(E_{1}^{(a_{1})} \oplus \dotsc \oplus E_{s}^{(a_{s})}) \cap \alpha
	\equiv_{\mathrm{num}}	
	\left( \frac{\mathrm{gcd}(a_{1}, \dotsc, a_{s})}{a_{1} \dotsc a_{s}}
	\prod_{j} s_{\bullet}(E_{j}^{(a_{j})}) \right)_{l} \cap \alpha
	\]

	{\em Step 1. Reduction steps.} The previous formula means that intersecting both sides with any \(\beta \in A_{k-l}(X)\) must give the same intersection number; we see right away that replacing \(\alpha\) by \(\alpha \cap \beta\), allows to reduce to the case \(l = k\), where one has to show equality between the two sides of the equation, which are seen as elements of \(\mathbb{Z}\). Also, breaking \(\alpha\) into its irreducible components, we see that we may finally assume \(\alpha = [X]\) and \(j = n\). By the classical splitting principle (see \cite[p.51]{ful98}), one may also assume that each \(E_{i}\) is split as a sum of line bundles.
	\medskip

	{\em Step 2. Proof of the formula in the split case.} Let us then assume that each \(E_{i} = L_{i, 1} \oplus \dotsc \oplus L_{i, r_{i}}\) is split as a direct sum of line bundles for all \(1 \leq i \leq s\), and let \(\alpha_{i, j} = c_{1} (L_{i, j})\) denote the corresponding Chern roots. We will rather prove the dual formula for \(s_{n}(\mathbf{E}^{\ast})\) to make the signs easier to track. 

	Let \(m_{0} := \gcd(a_{1}, \dotsc, a_{r})\). Then, applying the asymptotic Riemann-Roch theorem, one has that 
	\begin{align*}
		\int_{X} s_{n}(\mathbf{E}^{\ast}) & = \frac{1}{m_{0}^{n+r-1}}\int_{\mathbb{P}(\mathbf{E})} c_{1} \mathcal{O}(m_{0})^{n+r-1} \quad \text{(by definition of } s_{n}(\mathbf{E}^{\ast}))\\
					& = \lim_{\stackrel{m \longrightarrow +\infty}{m_{0} | m}} 
					\frac{\chi(\mathbb{P}(\mathbf{E}), \mathcal{O}(m))}{m^{n+r-1}/(n+r-1)!} 
	\end{align*}
	By the Leray spectral sequence, one also has
	\(
		\chi(\mathbb{P}(\mathbf{E}), \mathcal{O}(m))) = \chi(X, \pi_{\ast} \mathcal{O}(m)) = \chi(X, S^{m}\mathbf{E}). 
	\)
	But then, since each \(E_{i}\) is split, Proposition~\ref{prop:symmetricsum} below implies that
	\[
		\int_{X} s_{n}(\mathbf{E}^{\ast}) = 
		\frac{m_{0}}{a_{1}^{r_{1}} \dotsc a_{s}^{r_{s}}}
		\int_{X} \left( \prod_{i=1}^{s} \prod_{j=1}^{r_{i}} \sum_{p=0}^{n} \left(\frac{\alpha_{i, j}}{a_{i}}\right)^{p} \right)_{n}.
	\]
	Now, we see that for all \(i = 1, \dotsc, s\), the expression  \( \frac{1}{a_{i}^{r_{i} - 1}} \prod_{j=1}^{r_{i}} \sum_{p=0}^{n} \left(\frac{\alpha_{i, j}}{a_{i}}\right)^{p}\) identifies with the formula for \(s_{\bullet}((E_{i}^{\ast})^{(a_{i})})\) that is given in the statement of the proposition. Thus, one obtains
	\[
		\int_{X} s_{n}(\mathbf{E}^{\ast})
		= \frac{m_{0}}{a_{1} \dotsc a_{r}} 
		\int_{X} 
		\left( \prod_{i=1}^{s} s_{\bullet}((E_{i}^{\ast})^{(a_{i})}) \right)_{n},
	\]
	which gives the result.
\end{proof}

\subsection{Combinatorial lemmas} \label{sec:combinatorial}

The purpose of the following discussion is to prove Proposition~\ref{prop:symmetricsum}, that was used in the proof of the Whitney formula. We need several combinatorial lemmas, closely related to several computations that appeared in \cite{Cad19}.

\noindent
{\bf Notation.} 
\begin{enumerate}
	\item For all $n$, we let $\mathrm{vol}_{n}$ denote the $n$-dimensional euclidian volume measure.
	\item Let $m \in \mathbb N$. A \emph{$m$-dimensional simplex} $\Delta$ is a metric space isomorphic to the convex envelop of $m+1$ points in $\mathbb R^m$, such that each $p$ of them generate an affine $(p-1)$-space. We will sometimes write $\Delta \; \accentset{\circ}{\subset} \; \mathbb R^m$ to emphasize the fact that $\Delta$ has non-empty interior in $\mathbb R^m$ (or equivalently, that $\dim \Delta = m$) and to oppose this situation to the case of a $(m-1)$-dimensional simplex included in $\mathbb R^m$. 

	\item Recall that for any $m$-dimensional simplex $\Delta \accentset{\circ}{\subset} \mathbb R^m$, the \emph{uniform probability measure} of $\Delta$ is the measure $d \mathbf P_{\Delta} = \frac{1}{\mathrm{vol}_m(\Delta)} d \mathrm{vol}_m$. 

Since this measure on $\Delta$ is the unique probability measure which is the restriction of a translation invariant measure on $\mathbb R^m$, we see that if $\Delta_1, \Delta_2 \subseteq \mathbb R^m$ are $m$-dimensional simplexes, and if $\Psi \in \mathrm{GL}(\mathbb R^m)$ is such that $\Delta_2 = \Psi(\Delta_1)$, then $\Psi$ sends the uniform measure of $\Delta_1$ on the uniform measure of $\Delta_2$.

\end{enumerate}

\subsection{Lattices and volumes of fundamental domains}

Let $a_1, ..., a_r \in \mathbb N$. Let 
\[
	H = \{ (t_1, ..., t_r) \in \mathbb Z^r \; | \; \sum_i a_i t_i = 0\}.
\] Then $H \subseteq \mathbb Z^r$ is a \emph{primitive} sublattice, meaning that $\quotientd{\mathbb Z^r}{H}$ is torsion-free. Hence, by the adapted basis theorem, there exists a basis $(f_1, ..., f_r)$ of $\mathbb Z^r$ such that $(f_1, ..., f_{r-1})$ is in turn a basis for $H$. Let $C_H = \sum_{1 \leq i \leq r-1} [0, 1] \cdot f_i$ denote the associated fundamental domain of $H$. Note that all fundamental domains are image of one another by an element of \(\mathrm{SL}(H)\) so they all have the same \((n-1)\)-volume.

\begin{lemma} \label{lemlattice1} Any fundamental domain of $H$ has volume $\mathrm{vol}_{r-1} \left( C_H \right) = \frac{\sqrt{ \sum_{1 \leq i \leq r } a_i^2}}{\gcd(a_1, ..., a_r)}$.
\end{lemma}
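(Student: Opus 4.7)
The plan is to relate the $(r-1)$-dimensional covolume of $H$ in $V := H \otimes_{\mathbb{Z}} \mathbb{R}$ to the (known to be $1$) $r$-dimensional covolume of $\mathbb{Z}^{r}$ in $\mathbb{R}^{r}$, by exhibiting an explicit splitting of $\mathbb{Z}^{r}$ as $H \oplus \mathbb{Z} v$ for a carefully chosen $v$, and then computing the volume of a resulting fundamental domain in two different ways.

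First, I would consider the linear form $\phi : \mathbb{Z}^{r} \to \mathbb{Z}$ defined by $\phi(t_{1}, \dotsc, t_{r}) = \sum_{i} a_{i} t_{i}$. By definition, $\ker \phi = H$, and by Bezout's identity the image of $\phi$ is exactly $d \mathbb{Z}$, where $d := \gcd(a_{1}, \dotsc, a_{r})$. Picking any $v \in \mathbb{Z}^{r}$ with $\phi(v) = d$, the fact that $\mathbb{Z} v$ maps isomorphically onto the image of $\phi$ yields the direct-sum decomposition $\mathbb{Z}^{r} = H \oplus \mathbb{Z} v$ as abelian groups.

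The next step is to use this splitting to exhibit a fundamental domain for $\mathbb{Z}^{r}$ in $\mathbb{R}^{r}$. Namely, the ``prism'' $P := C_{H} + [0,1] \cdot v$ is one such fundamental domain (for any choice of fundamental domain $C_{H}$ of $H$ in $V$). Its $r$-dimensional Euclidean volume then equals $1$, since $\mathbb{Z}^{r}$ has covolume $1$ in $\mathbb{R}^{r}$. On the other hand, the elementary prism formula gives $\mathrm{vol}_{r}(P) = \mathrm{vol}_{r-1}(C_{H}) \cdot h$, where $h$ is the (unsigned) distance from $v$ to the hyperplane $V = a^{\perp}$, with $a := (a_{1}, \dotsc, a_{r})$. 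Projecting $v$ onto the unit normal $a/\|a\|$ gives
\[
h = \frac{|\langle v, a \rangle|}{\|a\|} = \frac{|\phi(v)|}{\|a\|} = \frac{d}{\sqrt{\sum_{i} a_{i}^{2}}}.
\]
Equating the two expressions for $\mathrm{vol}_{r}(P)$ immediately yields $\mathrm{vol}_{r-1}(C_{H}) = \|a\|/d$, which is the claimed formula.

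I do not foresee any serious obstacle; the only mild verification is the decomposition $\mathbb{Z}^{r} = H \oplus \mathbb{Z} v$, which is routine once one knows that $\phi$ surjects onto $d\mathbb{Z}$. The rest is a direct computation.
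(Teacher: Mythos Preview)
Your proof is correct and follows essentially the same route as the paper: both split $\mathbb{Z}^{r} = H \oplus \mathbb{Z} v$, take the corresponding parallelepiped as a fundamental domain of covolume $1$, and factor its volume as $\mathrm{vol}_{r-1}(C_H)$ times the distance from $v$ to the hyperplane $H_{\mathbb{R}}$, which is $|\langle v,a\rangle|/\|a\| = d/\|a\|$. Your version is slightly more streamlined in that you handle general $d$ directly via the splitting of $0 \to H \to \mathbb{Z}^{r} \to d\mathbb{Z} \to 0$, whereas the paper first reduces to $d=1$ and appeals to the adapted basis theorem.
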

\begin{proof}
	The lattice $H$ and the proposed formula for the volume do not change if we replace $a_i$ by $\frac{a_i}{\gcd(a_1, ..., a_r)}$, hence we can suppose that $\gcd(a_1, ..., a_r) = 1$. In this case, there exist $u_1, ..., u_r \in \mathbb Z$ such that $\sum_i a_i u_i = 1$. Replacing \(f_{r}\) by the vector \((u_{1}, \dotsc, u_{r})\) in \((f_{1}, \dotsc, f_{r})\) still gives a basis of \(\mathbb{Z}^{r}\), so we can assume that $f_r = (u_1, ..., u_r)$.

Since $(f_1, ..., f_{r})$ is a basis of $\mathbb Z^r$, we have $\mathrm{vol}_r(\sum_{1 \leq i \leq r} [0, 1] \cdot f_i) = 1$. Moreover, 
\begin{align*}
\mathrm{vol}_r(\sum_{1 \leq i \leq r} [0, 1] \cdot f_i) & = \mathrm{vol}_{r-1}(\sum_{1 \leq i \leq r - 1} [0, 1] \cdot f_i) \, \cdot \, \norm{\pi_{H^\perp} (f_r) }_{\mathrm{eucl}} \\
	& = \mathrm{vol}_{r-1}(C_H) \, \cdot\, \norm{\pi_{H^\perp} (f_r) }_{\mathrm{eucl}}
\end{align*}
	where $\pi_{H^\perp} (f_r)$ is the orthogonal projection of $f_r$ on $H^\perp$, and $\norm{ \cdot }_{\mathrm{eucl}}$ is the euclidian norm. We obtain
	\[
		\mathrm{vol}_{r-1}(C_{H}) = \frac{1}{\norm{\pi_{H^{\perp}}(f_{r})}}.
	\]

	Let us now compute \(\norm{\pi_{H^{\perp}}(f_{r})}\). Since $H^\perp = \mathbb R \cdot (a_1, \dotsc, a_r)$ by definition of $H$, one can write
	\[
		f_{r} = (u_{1}, \dotsc, u_{r}) = \lambda \cdot (a_{1}, \dotsc, a_{r}) + (b_{1}, \dotsc, b_{r}),
	\]
	with \(\lambda \in \mathbb{R}\) and \((b_{1}, \dotsc, b_{r}) \in H_{\mathbb{R}}\). Thus, one has
	\[
		0 = \sum_{j} a_{j} b_{j} = \sum_{j} a_{j} u_{j} - \lambda \sum_{j} a_{j}^{2}.
	\]
	 Since \(\sum_{j} a_{j} u_{j} = 1\), this gives \(\lambda = \frac{1}{\sum_{j} a_{j}^{2}}\) and thus \(\norm{\pi_{H^\perp} (f_r) }_{\mathrm{eucl}}^{2} = \lambda^{2} \sum_{j} a_{j}^{2} = \frac{1}{\sum_{j} a_{j}^{2}}\). We obtain the result.
\end{proof}

\begin{lemma} \label{lem:lattice2} Let $\underline{a} = (a_1, ..., a_r) \in \mathbb N^r$, and let $\Delta_{\underline{a}} = \{ (t_i) \in \mathbb R_+^r \; | \; \sum_i a_i t_i = 1 \}$. Then the volume of $\Delta_{\underline{a}}$ is $\mathrm{vol}_{r-1} (\Delta_{\underline{a}}) = \frac{1}{(r-1)!} \frac{\gcd(a_1, ..., a_r)}{a_1 ... a_r} \mathrm{vol}_{r-1}(C_H)$;
\end{lemma}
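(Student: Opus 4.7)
The plan is to compute $\mathrm{vol}_{r-1}(\Delta_{\underline{a}})$ directly by projection, and then match the target formula using the value of $\mathrm{vol}_{r-1}(C_H)$ already established in Lemma~\ref{lemlattice1}. Combining with that lemma, the claim is equivalent to the purely Euclidean identity
\[
	\mathrm{vol}_{r-1}(\Delta_{\underline{a}}) = \frac{\sqrt{\sum_{i} a_i^2}}{(r-1)! \, a_1 \cdots a_r},
\]
so the only thing left to prove is this formula.

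First I would use the projection $\pi : \mathbb{R}^r \to \mathbb{R}^{r-1}$ forgetting the last coordinate. Since $\Delta_{\underline{a}}$ is a graph over its image (solving $t_r = \frac{1}{a_r}(1 - \sum_{i<r} a_i t_i)$), it projects bijectively onto the weighted simplex
\[
	\pi(\Delta_{\underline{a}}) = \Bigl\{ (t_1, \dotsc, t_{r-1}) \in \mathbb{R}_+^{r-1} \;\Big|\; \sum_{i=1}^{r-1} a_i t_i \leq 1 \Bigr\},
\]
whose $(r-1)$-volume is $\frac{1}{(r-1)! \, a_1 \cdots a_{r-1}}$: indeed, the linear change of variables $s_i = a_i t_i$ turns it into the standard simplex of volume $\frac{1}{(r-1)!}$ with Jacobian $a_1 \cdots a_{r-1}$.

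Next, the ratio between $\mathrm{vol}_{r-1}(\Delta_{\underline{a}})$ and $\mathrm{vol}_{r-1}(\pi(\Delta_{\underline{a}}))$ equals $1/|\cos \theta|$, where $\theta$ is the angle between the affine hyperplane $\{\sum_i a_i t_i = 1\}$ and the coordinate hyperplane $\{t_r = 0\}$. Computing with unit normals $\frac{1}{\sqrt{\sum_i a_i^2}}(a_1, \dotsc, a_r)$ and $e_r$ gives $|\cos \theta| = a_r / \sqrt{\sum_i a_i^2}$. Plugging this in and simplifying yields the displayed formula, and hence the lemma. There is no real obstacle here; the only mild point to track is the Jacobian of the oblique projection, which is handled cleanly by the angle computation.
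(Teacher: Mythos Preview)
Your proof is correct and follows essentially the same route as the paper's: both reduce the claim via Lemma~\ref{lemlattice1} to the Euclidean identity $\mathrm{vol}_{r-1}(\Delta_{\underline{a}}) = \frac{\sqrt{\sum_i a_i^2}}{(r-1)!\,a_1\cdots a_r}$, and both establish this by viewing $\Delta_{\underline{a}}$ as a graph over a region in $\mathbb{R}^{r-1}$. The only cosmetic difference is that the paper uses a parametrization from the standard simplex and computes the Gram determinant of the pushforward (obtaining $\det G = \frac{\sum_i a_i^2}{\prod_i a_i^2}$), whereas you project onto the first $r-1$ coordinates and phrase the Jacobian as a $1/|\cos\theta|$ factor; these are dual formulations of the same computation.
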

\begin{proof}
	By Lemma \ref{lemlattice1}, it suffices to show that $\mathrm{vol}_{r-1}(\Delta_{\underline{a}}) = \frac{1}{(r-1)!} \frac{\sqrt{\sum_{1 \leq i \leq r} a_i^2}}{a_1 ... a_r}$. To perform this computation, we can for example use the parametrization of $\Delta_{\underline{a}}$ given by $\psi : t \in \Delta   \longmapsto (\frac{1}{a_1} t_1, ..., \frac{1}{a_{r-1}} t_{r-1}, \frac{1}{a_r} (1 - \sum_{1 \leq i \leq r - 1} t_i))$, where $\Delta = \{ (t_i) \in [0,1]^{r-1} \; | \; \sum_i t_i \leq 1 \}$ is the standard $(r-1)$-dimensional simplex in $\mathbb R^{r-1}$. We have then $\psi^\ast( d\mathrm{vol}_{r-1} ) = \sqrt{\det G} \, d\mathrm{vol}_{r-1}$, where $G= (\left< \psi_\ast(e_i), \psi_\ast(e_j)\right>)_{i,j}$ is the Gram matrix of the vectors $\psi_\ast (e_i)$ ($(e_i)_i$ being the canonical basis of $\mathbb R^{r-1}$). A simple computation shows that $\det G = \frac{1}{\prod_i a_i^2} \sum_{i} a_i^2$. Thus, we have $\mathrm{vol}_{r-1}(\Delta_{\underline{a}}) = \frac{\sqrt{\sum_i a_i^2}}{\prod_i a_i} \mathrm{vol}_{r-1}(\Delta)$. To conclude, it suffices to compute $\mathrm{vol}_{r-1}(\Delta) = \frac{1}{(r-1)!}$, which is easy.
\end{proof}

\subsection{Some integrals} We are now going to compute the integral of some monomial functions on simplexes with respect to the uniform probability measure. The goal is to prove the following.

\begin{lemma} \label{lem:integral} Let \(\underline{a} = (a_{1}, \dotsc, a_{r}) \in \mathbb{N}^{r}\), and let \(\Delta_{\underline{a}} = \{ (t_{i}) \in \mathbb{R}_{+}^{r} \; | \; \sum_{i} a_{i} t_{i} = 1 \}\). Let \(p_{1}, \dotsc, p_{r} \in \mathbb{N}\). Then
	\[
		\int_{\Delta_{\underline{a}}} t_{1}^{p_{1}} \dotsc t_{r}^{p_{r}} 
		d \mathbf{P}_{\Delta_{\underline{a}}}(t)
		=
		\frac{(r-1)!\, p_{1}! \dotsc p_{r}!}{(p_{1} + p_{2} + \dotsc + p_{r} + r - 1)!}
		\frac{1}{a_{1}^{p_{1}} \dotsc a_{r}^{p_{r}}}.
	\]
\end{lemma}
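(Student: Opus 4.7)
The plan is to reduce the integral over $\Delta_{\underline{a}}$ to the classical Dirichlet integral on the standard simplex via a linear change of variables, and then establish that Dirichlet identity by induction on $r$ using the Beta function.

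\textbf{Step 1.} Consider the linear map $\Phi : (t_1, \dotsc, t_r) \mapsto (a_1 t_1, \dotsc, a_r t_r)$, which restricts to an affine bijection from $\Delta_{\underline{a}}$ onto the standard simplex $\Delta := \{(u_i) \in \mathbb{R}_+^r \mid \sum_i u_i = 1\}$. Since an affine bijection between simplexes sends the uniform probability measure to the uniform probability measure (by normalization, as in Notation~(3)), the change of variables yields
\[
\int_{\Delta_{\underline{a}}} t_1^{p_1} \dotsc t_r^{p_r} \, d\mathbf{P}_{\Delta_{\underline{a}}}(t) \;=\; \frac{1}{a_1^{p_1} \dotsc a_r^{p_r}} \int_\Delta u_1^{p_1} \dotsc u_r^{p_r} \, d\mathbf{P}_\Delta(u),
\]
so it remains to prove that the right-hand integral equals $\frac{(r-1)!\, p_1! \dotsc p_r!}{(p_1 + \dotsc + p_r + r - 1)!}$.

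\textbf{Step 2.} Project $\Delta$ onto its first $r-1$ coordinates via $\pi : (u_1, \dotsc, u_r) \mapsto (u_1, \dotsc, u_{r-1})$, whose image is the solid standard simplex $\Delta^{r-1} \subset \mathbb{R}^{r-1}$ of Lebesgue volume $\frac{1}{(r-1)!}$. Since $\pi$ is affine, $\pi_{\ast} d\mathbf{P}_\Delta$ is a probability measure on $\Delta^{r-1}$ with constant density, which must therefore equal $(r-1)!\, du_1 \dotsc du_{r-1}$. Substituting $u_r = 1 - u_1 - \dotsc - u_{r-1}$, the claim reduces to the Dirichlet identity
\[
\int_{\Delta^{r-1}} u_1^{p_1} \dotsc u_{r-1}^{p_{r-1}} (1 - u_1 - \dotsc - u_{r-1})^{p_r} \, du_1 \dotsc du_{r-1} \;=\; \frac{p_1! \dotsc p_r!}{(p_1 + \dotsc + p_r + r - 1)!}.
\]

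\textbf{Step 3.} Prove this identity by induction on $r$. The base case $r = 2$ is the Beta integral $\int_0^1 u^{p_1}(1-u)^{p_2}\,du = \frac{p_1!\,p_2!}{(p_1+p_2+1)!}$. For the inductive step, isolate $u_{r-1}$ as an outer variable and rescale $u_i = (1 - u_{r-1}) v_i$ for $i \leq r-2$ in the inner integral: the Jacobian is $(1 - u_{r-1})^{r-2}$, the slice $\{u_1 + \dotsc + u_{r-2} \leq 1 - u_{r-1}\}$ maps onto $\Delta^{r-2}$, and $1 - u_1 - \dotsc - u_{r-1} = (1 - u_{r-1})(1 - v_1 - \dotsc - v_{r-2})$. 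After collecting the overall power of $(1 - u_{r-1})$ and applying the inductive hypothesis in the $v$-variables, the outer integral in $u_{r-1}$ becomes a single Beta integral that combines with the resulting partial factorial to give the claim. The main (and essentially only) obstacle is the bookkeeping in this last step: one must correctly track the accumulated factor $(1 - u_{r-1})^{p_1 + \dotsc + p_{r-2} + p_r + r - 2}$ before applying the closing Beta identity, but everything else is routine.
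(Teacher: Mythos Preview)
Your proof is correct and follows essentially the same route as the paper: first reduce to the standard simplex $\Delta_{\underline{1}}$ via the scaling map $(t_i)\mapsto(a_i t_i)$ (using that affine bijections preserve the uniform probability measure), then establish the standard-simplex identity by induction on $r$ with the Beta integral supplying the one-variable step. The only cosmetic difference is that you pass explicitly through the solid simplex $\Delta^{r-1}\subset\mathbb{R}^{r-1}$ and the classical Dirichlet integral, whereas the paper carries the probability measure throughout and tracks volume ratios of scaled simplexes; the computations are the same.
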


First, let us remark that we can easily get back to the case where \(\underline{a}\) is equal to \(\underline{1} := (1, \dotsc, 1)\). Indeed, letting \(\Psi(t_{1}, \dotsc, t_{r}) = (a_{1} t_{1}, \dotsc, a_{r} t_{r})\), one has \(\Psi_{\ast} d\mathbf{P}_{\Delta_{\underline{1}}} = d \mathbf{P}_{\Delta_{\underline{a}}}\), so that
\begin{align*}
\int_{\Delta_{\underline{a}}} t_{1}^{p_{1}} \dotsc t_{r}^{p_{r}} 
		d \mathbf{P}_{\Delta_{\underline{a}}}(t)
	& =
	\int_{\Delta_{\underline{1}}} 
	\left(\frac{t_{1}}{a_{1}}\right)^{a_{1}}
	\dotsc
	\left(\frac{t_{r}}{a_{r}}\right)^{a_{r}}
	d \mathbf{P}_{\Delta_{\underline{1}}}(t) \\
	& =
	\frac{1}{a_{1}^{p_{1}} \dotsc a_{r}^{p_{r}}} \int_{\Delta_{\underline{1}}} t_{1}^{p_{1}} \dotsc t_{r}^{p_{r}} d \mathbf{P}_{\Delta_{\underline{1}}}(t)
\end{align*}

For any \(r \in \mathbb{N}\) and \(p_{1}, \dotsc, p_{r} \in \mathbb{N}\), let \(C_{p_{1}, \dotsc, p_{r}} := \int_{\Delta_{\underline{1}}} t_{1}^{p_{1}} \dotsc t_{r}^{p_{r}} d \mathbf{P}_{\Delta_{\underline{1}}}(t)\). By the remark above, the proof of Lemma~\ref{lem:integral} will then be complete with the following result.
\begin{lemma} We have \(C_{p_{1}, \dotsc, p_{r}} = 
	\frac{(r-1)!\, p_{1}! \dotsc p_{r}!}{(p_{1} + p_{2} + \dotsc + p_{r} + r - 1)!}.
	\)
\end{lemma}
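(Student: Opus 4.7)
The plan is to reduce the integral over $\Delta_{\underline{1}}$ to a standard Dirichlet-type integral over the standard $(r-1)$-simplex, and then compute the latter by induction on $r$. This should be a direct computation; no real obstacle is expected, only some bookkeeping to match the normalizing constants coming from the probability measure.

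First, I would use the parametrization $\psi : \Delta_{0} \longrightarrow \Delta_{\underline{1}}$ given by $\psi(t_{1}, \dotsc, t_{r-1}) = (t_{1}, \dotsc, t_{r-1}, 1 - \sum_{i=1}^{r-1} t_{i})$, where $\Delta_{0} := \{ t \in \mathbb{R}_{+}^{r-1} \mid \sum t_{i} \leq 1 \}$ is the standard $(r-1)$-simplex. By the Gram-matrix computation that already appeared in the proof of Lemma~\ref{lem:lattice2} (applied with $a_{i} = 1$ for all $i$), one has $\psi^{\ast} d\mathrm{vol}_{r-1} = \sqrt{r} \, dt_{1} \dotsc dt_{r-1}$, while Lemma~\ref{lem:lattice2} itself gives $\mathrm{vol}_{r-1}(\Delta_{\underline{1}}) = \frac{\sqrt{r}}{(r-1)!}$. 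Pulling back the uniform probability measure therefore yields
\[
C_{p_{1}, \dotsc, p_{r}} = (r-1)! \int_{\Delta_{0}} t_{1}^{p_{1}} \dotsc t_{r-1}^{p_{r-1}} \Bigl(1 - \sum_{i=1}^{r-1} t_{i}\Bigr)^{p_{r}} dt_{1} \dotsc dt_{r-1}.
\]

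It then suffices to prove by induction on $r \geq 1$ that the integral $I(p_{1}, \dotsc, p_{r})$ on the right-hand side equals $\frac{p_{1}! \dotsc p_{r}!}{(p_{1} + \dotsc + p_{r} + r - 1)!}$. The case $r = 1$ is trivial (empty integral equal to $1$), and for $r = 2$ the claim is the classical Euler beta integral $\int_{0}^{1} t^{p_{1}}(1-t)^{p_{2}} \, dt = \frac{p_{1}! p_{2}!}{(p_{1}+p_{2}+1)!}$. For the inductive step, I would apply Fubini by integrating first over $(t_{1}, \dotsc, t_{r-2})$ at fixed $t_{r-1} \in [0,1]$ and performing the change of variables $u_{i} = t_{i}/(1 - t_{r-1})$. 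The inner integral becomes $(1 - t_{r-1})^{p_{1} + \dotsc + p_{r-2} + p_{r} + (r-2)}$ times the $(r-1)$-variable integral $I(p_{1}, \dotsc, p_{r-2}, p_{r})$, which is known by induction. A final application of the beta integral in the variable $t_{r-1}$ then telescopes the factorials into the desired expression.

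Combining the formula for $I(p_{1}, \dotsc, p_{r})$ with the factor $(r-1)!$ from the first step yields exactly the claimed value of $C_{p_{1}, \dotsc, p_{r}}$. The only technical point to watch is the consistent handling of normalizations between $d\mathrm{vol}_{r-1}$, $d\mathbf{P}_{\Delta_{\underline{1}}}$, and the flat Lebesgue measure on $\Delta_{0}$, but the identities in Lemmas~\ref{lemlattice1} and~\ref{lem:lattice2} already provide all the required constants.
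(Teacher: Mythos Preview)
Your proof is correct and follows essentially the same approach as the paper: both arguments proceed by induction on $r$, peel off one coordinate via Fubini, rescale the remaining variables to land back on a lower-dimensional simplex, and close the induction with the beta integral $\int_{0}^{1} t^{a}(1-t)^{b}\,dt = \frac{a!\,b!}{(a+b+1)!}$. The only cosmetic difference is that you first flatten $\Delta_{\underline{1}}$ onto the standard simplex $\Delta_{0}\subset\mathbb{R}^{r-1}$ (isolating the normalization constant $(r-1)!$ at the outset), whereas the paper works intrinsically on $\Delta_{\underline{1}}$ and tracks the volume ratios through the induction; the resulting recursion and the use of the beta identity are identical.
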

\begin{proof}
	By induction on \(r\). Letting \(\Delta_{r} := \Delta_{(1, \dotsc, 1)}\) to simplify the notation (\(1\) is repeated \(r\) times), one has
	\begin{align*}
		C_{p_{1}, \dotsc, p_{r}} & = 
		\int_{t_{1} + \dotsc + t_{r} = 1} 
		t_{1}^{p_{1}} \dotsc t_{r}^{p_{r}} 
		\frac{dt_{1} \wedge \dotsc \wedge dt_{r-1}}{\mathrm{vol}_{dt_{1} \wedge \dotsc \wedge dt_{r-1}}(\Delta_{r})} \\
		& = \int_{t_{1} + s = 1} dt_{1}\, t_{1}^{p_{1}}\, 
		\frac{\mathrm{vol}_{dt_{2} \wedge \dotsc \wedge dt_{r-1}}(s \Delta_{r-1})}{\mathrm{vol}_{dt_{1} \wedge \dotsc \wedge dt_{r-1}}(\Delta_{r})}  
		\int_{t_{2} + \dotsc + t_{r} = s} t_{2}^{p_{2}} \dotsc t_{r}^{p_{r}} 
		\frac{dt_{2} \wedge \dotsc \wedge dt_{r-1}}{\mathrm{vol}_{dt_{2} \wedge \dotsc \wedge dt_{r-1}}(s \Delta_{r-1})} \\
		& = \int_{t_{1} + s = 1} dt_{1}\, t_{1}^{p_{1}}\, 
		\frac{s^{r-2}/(r-2)!}{1/(r-1)!}  
		\cdot 
		s^{p_{2} + \dotsc + p_{r}} \int_{t_{2} + \dotsc + t_{r} = 1} t_{2}^{p_{2}} \dotsc t_{r}^{p_{r}} 
		\frac{dt_{2} \wedge \dotsc \wedge dt_{r-1}}{\mathrm{vol}_{dt_{2} \wedge \dotsc \wedge dt_{r-1}}(\Delta_{r-1})} \\
		& = (r-1) \int_{t_{1} + s = 1} dt_{1} \, t_{1}^{p_{1}}\, s^{r - 2 + p_{2} + \dotsc + p_{r}} C_{p_{2}, \dotsc, p_{r}} 
		= (r-1) \frac{p_{1}!(p_{2} + \dotsc + p_{r} + r-2)!}{(p_{1} + p_{2} + \dotsc + p_{r} + r - 1)!} C_{p_{2}, \dotsc, p_{r}}
	\end{align*}
	where at the last line, we used Lemma~\ref{lem:Bfunction} below. This permits to prove the formula by induction.
\end{proof}

\begin{lemma} \label{lem:Bfunction} 
	Let \(a, b \in \mathbb{N}\). One has
	\[
		\int_{0}^{1} t^{a} (1 - t)^{b} dt = \frac{a! b!}{(a + b + 1)!}.
	\]
\end{lemma}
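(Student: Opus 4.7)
The plan is to proceed by induction on $b$, with the statement quantified over all $a \in \mathbb{N}$. The base case $b = 0$ is immediate: $\int_0^1 t^a\, dt = \frac{1}{a+1}$, which matches $\frac{a!\,0!}{(a+1)!}$.

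For the inductive step ($b \geq 1$), the natural move is integration by parts, choosing $u = (1-t)^b$ and $dv = t^a\, dt$. The boundary term $\left[\frac{t^{a+1}(1-t)^b}{a+1}\right]_0^1$ vanishes since $a \geq 0$ and $b \geq 1$, producing the recursion
$$\int_0^1 t^a(1-t)^b\, dt = \frac{b}{a+1}\int_0^1 t^{a+1}(1-t)^{b-1}\, dt.$$
Applying the induction hypothesis at shifted parameters $(a+1, b-1)$ and simplifying
$$\frac{b}{a+1}\cdot\frac{(a+1)!\,(b-1)!}{(a+b+1)!} = \frac{a!\,b!}{(a+b+1)!}$$
closes the induction.

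There is no genuine obstacle here; the only bookkeeping point is to formulate the induction correctly, namely to induct on $b$ while leaving $a$ free to range over all nonnegative integers, so that the hypothesis remains applicable after the shift $a \mapsto a+1$. (Alternatively, one could induct on $a + b$, which makes the shift trivially admissible.)
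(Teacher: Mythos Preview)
Your proof is correct. The paper, by contrast, does not give an argument at all: it simply invokes the Beta function identity $\int_0^1 t^a(1-t)^b\,dt = \frac{\Gamma(a+1)\Gamma(b+1)}{\Gamma(a+b+2)}$ and specializes to integer arguments. Your induction via integration by parts is the standard elementary derivation of that identity in the integer case, so it is more self-contained but of course longer; the paper's version is a one-line citation of a classical fact. Both are entirely adequate for a lemma of this weight.
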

\begin{proof} This comes from the Beta function identity \(\int_{0}^{1} t^{a} (1 - t)^{b} dt = \frac{\Gamma(a+1) \Gamma(b+1)}{\Gamma(a+b+2)}\). 
\end{proof}

\subsection{Riemann integrals and asymptotic estimates}

Using the previous results, we can now give the following asymptotic estimates that will prove useful to compute the asymptotics of Euler characteristics.

\begin{lemma} \label{lem:asymptoticsriemann} Fix integers \(a_{1}, \dotsc, a_{r} \in \mathbb{N}\), and \(p_{1}, \dotsc, p_{r} \in \mathbb{N}\). We have, for \(m \longrightarrow + \infty\) divisible by \(\gcd(a_{1}, \dotsc, a_{r})\):
	\[
		\sum_{a_{1} l_{1} +  \dotsc + a_{r} l_{r} = m}
		\frac{l_{1}^{p_{1}}}{p_{1}!} \dotsc \frac{l_{r}^{p_{r}}}{p_{r}!} = 
		\frac{\gcd(a_{1}, \dotsc, a_{r})}{a_{1}^{p_{1} + 1} \dotsc a_{r}^{p_{r} + 1}}
		\frac{m^{p_{1} + \dotsc + p_{r} + r - 1}}{(p_{1} + p_{2} + \dotsc + p_{r} + r - 1)!}
		+
		o(m^{p_{1} + \dotsc + p_{r} + r - 1}).
	\]
\end{lemma}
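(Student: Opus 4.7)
The plan is to treat the left-hand side as a Riemann sum for an integral over the simplex $\Delta_{\underline{a}} = \{ t \in \mathbb{R}_+^r \mid \sum_i a_i t_i = 1\}$, then combine the volume computations of Lemmas~\ref{lemlattice1} and~\ref{lem:lattice2} with the integral evaluation of Lemma~\ref{lem:integral}.

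\medskip

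\noindent
\textbf{Step 1 (rescaling).} Set $P := p_1 + \dots + p_r$ and write $l_i = m\, t_i$. The condition $\sum_i a_i l_i = m$ becomes $\sum_i a_i t_i = 1$, i.e.\ $(t_1,\dots,t_r) \in \Delta_{\underline{a}}$. The summand then rewrites as
\[
  \frac{l_1^{p_1}}{p_1!} \dotsc \frac{l_r^{p_r}}{p_r!}
  \;=\;
  \frac{m^P}{p_1! \dots p_r!}\; t_1^{p_1} \dots t_r^{p_r}.
\]
The condition that $m$ be divisible by $\gcd(a_1,\dots,a_r)$ guarantees that the fiber
\[
  \Lambda_m \;:=\; \{ l \in \mathbb{N}^r \mid a_1 l_1 + \dots + a_r l_r = m\}
\]
is non-empty, and it is then an affine translate of the primitive lattice $H \subset \mathbb{Z}^r$ from Lemma~\ref{lemlattice1} intersected with $\mathbb{R}_+^r$. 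Rescaled by $1/m$, these points form a subset of $\Delta_{\underline{a}}$ whose mesh shrinks like $1/m$.

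\medskip

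\noindent
\textbf{Step 2 (Riemann sum).} I would justify that, for any continuous function $f$ on $\Delta_{\underline{a}}$,
\[
  \frac{1}{\# \Lambda_m} \sum_{l \in \Lambda_m} f\bigl(l/m\bigr)
  \;\xrightarrow[m \to \infty]{}\;
  \int_{\Delta_{\underline{a}}} f(t)\, d\mathbf{P}_{\Delta_{\underline{a}}}(t).
\]
This is the standard equidistribution of lattice points in a compact convex set dilated to scale $m$: each Jordan-measurable piece captures a number of lattice points proportional to its $(r-1)$-volume, up to a boundary error of lower order. The cardinality is classically
\[
  \# \Lambda_m \;=\; \frac{\mathrm{vol}_{r-1}(m\, \Delta_{\underline{a}})}{\mathrm{vol}_{r-1}(C_H)} + O(m^{r-2})
  \;=\; \frac{m^{r-1}\, \mathrm{vol}_{r-1}(\Delta_{\underline{a}})}{\mathrm{vol}_{r-1}(C_H)} + O(m^{r-2}).
\]

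\medskip

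\noindent
\textbf{Step 3 (assembly).} Applying the above to $f(t) = t_1^{p_1} \dots t_r^{p_r}$ and combining with Step~1 yields
\[
  \sum_{l \in \Lambda_m} \frac{l_1^{p_1}}{p_1!}\dotsc \frac{l_r^{p_r}}{p_r!}
  \;=\;
  \frac{m^{P+r-1}\, \mathrm{vol}_{r-1}(\Delta_{\underline{a}})}{\mathrm{vol}_{r-1}(C_H)\, p_1!\dots p_r!}
  \int_{\Delta_{\underline{a}}} t_1^{p_1}\dots t_r^{p_r}\, d\mathbf{P}_{\Delta_{\underline{a}}}(t)
  \;+\; o(m^{P+r-1}).
\]
By Lemma~\ref{lem:lattice2},
\(
  \mathrm{vol}_{r-1}(\Delta_{\underline{a}})/\mathrm{vol}_{r-1}(C_H) = \gcd(a_1,\dots,a_r)/((r-1)!\, a_1 \dots a_r),
\)
and Lemma~\ref{lem:integral} gives the value of the integral. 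Multiplying everything out cancels the factorials $p_i!$, the factor $(r-1)!$, and produces the announced expression.

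\medskip

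The main obstacle is Step~2, i.e.\ the rigorous justification of the Riemann sum approximation, which is not quite standard because the sum is over an affine slice and not over a lattice in $\mathbb{R}^{r-1}$ directly. The cleanest way is to pick, as in the proof of Lemma~\ref{lemlattice1}, a $\mathbb{Z}$-basis $(f_1,\dots,f_{r-1})$ of $H$ and a vector $v_m \in \Lambda_m$, parametrize $\Lambda_m = v_m + \bigoplus_i \mathbb{Z} f_i$ by $\mathbb{Z}^{r-1}$, and observe that after this affine change of coordinates the sum becomes an ordinary Riemann sum over a compact polytope in $\mathbb{R}^{r-1}$, for which the convergence together with the boundary estimate is classical.
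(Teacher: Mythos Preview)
Your proof is correct and follows essentially the same route as the paper: rescale $l_i = m t_i$, interpret the sum as a Riemann sum for $\int_{\Delta_{\underline{a}}} t_1^{p_1}\dotsc t_r^{p_r}\, d\mathrm{vol}_{r-1}$ over the rescaled lattice $\frac{1}{m}H_m$, and conclude using Lemmas~\ref{lem:lattice2} and~\ref{lem:integral}. The only cosmetic difference is that the paper multiplies the sum directly by $\mathrm{vol}_{r-1}(\frac{1}{m}C_H)$ to form the Riemann sum, whereas you separate out the count $\#\Lambda_m$ and the equidistribution statement; your final remark on parametrizing $\Lambda_m$ by a $\mathbb{Z}$-basis of $H$ is exactly how one makes either version rigorous.
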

\begin{proof} Let \(H_{m}\) be the set of \((l_{1}, \dotsc, l_{r}) \in \mathbb{Z}^{r}\) such that \(\sum_{j} l_{j} a_{j} = m\). It is non-empty if and only if \(\gcd(a_{1}, \dotsc, a_{r}) | m\), and if such is the case, it is then a translate of the lattice \(H = \{ (l_{1}, \dotsc, l_{r}) \in \mathbb{Z}^{r} \; | \; \sum_{j} a_{j} l_{j} = 0\}\). Let \(C_{H}\) denote a fundamental domain for \(H\).
	\medskip

	As \((l_{1}, \dotsc, l_{r})\) varies in \(H_{m} \cap \mathbb{N}^{r}\), the element \((\frac{l_{1}}{m}, \dotsc, \frac{l_{r}}{m})\) varies in \(\Delta_{\underline{a}}\), running in a lattice with cells isometric to \(\frac{1}{m}C_{H}\). Thus, one can use a Riemann sum to obtain
	\begin{align*}
		\mathrm{vol}_{r-1}(\frac{1}{m} C_{H}) \cdot
		\sum_{a_{1} l_{1} +  \dotsc + a_{r} l_{r} = m}
		\left(\frac{l_{1}}{m}\right)^{p_{1}} 
		\dotsc 
		\left(\frac{l_{r}}{m}\right)^{p_{r}}
		& \underset{m \longrightarrow + \infty}{\longrightarrow}
		\int_{\Delta_{\underline{a}}} t_{1}^{p_{1}} \dotsc t_{r}^{p_{r}} d \mathrm{vol}_{r-1}(t).
		\\
		& = \mathrm{vol}_{r-1}(\Delta_{\underline{a}})
		\int_{\Delta_{\underline{a}}} t_{1}^{p_{1}} \dotsc t_{r}^{p_{r}} d \mathbf{P}_{\Delta_{\underline{a}}}(t). 
	\end{align*}
	Thus we deduce
	\[
		\frac{1}{m^{p_{1} + \dotsc + p_{r} + r - 1}}	
		\sum_{a_{1} l_{1} +  \dotsc + a_{r} l_{r} = m}
		l_{1}^{p_{1}} \dotsc l_{r}^{p_{r}}
		\longrightarrow
		\frac{\mathrm{vol}_{r-1}(\Delta_{\underline{a}})}{\mathrm{vol}_{r-1}(C_{H})}
		\int_{\Delta_{\underline{a}}} t_{1}^{p_{1}} \dotsc t_{r}^{p_{r}} d \mathbf{P}_{\Delta_{\underline{a}}}(t). 
	\]
	The right hand side can be computed using Lemmas~\ref{lem:lattice2} and \ref{lem:integral}. This gives the result.
\end{proof}

We will need another version of that lemma for our application to the asymptotic Riemann-Roch theorem.

\begin{lemma} \label{lem:asymptfraction} Let \(n, r \in \mathbb{N}\) be two integers. Let \(\alpha_{1}, \dotsc, \alpha_{r}\) be indeterminates over \(\mathbb{C}\). Fix integers \(a_{1}, \dotsc, a_{r} \in \mathbb{N}\), and \(p_{1}, \dotsc, p_{r} \in \mathbb{N}\). We have, for \(m \longrightarrow + \infty\) divisible by \(\gcd(a_{1}, \dotsc, a_{r})\):
	\[
		\sum_{a_{1} l_{1} + \dotsc + a_{r} l_{r} = m}
		\frac{(\alpha_{1} l_{1} + \dotsc + \alpha_{r} l_{r})^{n}}{n!}
		=
		\frac{\gcd(a_{1}, \dotsc, a_{r})}{a_{1} \dotsc a_{r}} 
		\left[
			\sum_{p_{1} + \dotsc + p_{r} = n} \left(\frac{\alpha_{1}}{a_{1}}\right)^{p_{1}} \dotsc \left(\frac{\alpha_{1}}{a_{r}}\right)^{p_{r}} 
		\right]
		\frac{m^{n + r -1}}{(n+ r - 1)!}
		+ o(m^{n+r-1}).
	\]
	where \(o(m^{n+r-1})\) means a homogeneous polynomial of degree \(n\) in \(\alpha_{1}, \dotsc, \alpha_{n}\), all of whose coefficients are negligeable compared to \(m^{n+ r -1}\).
\end{lemma}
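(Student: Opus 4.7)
The plan is to reduce this statement directly to Lemma~\ref{lem:asymptoticsriemann} by applying the multinomial expansion and then linearity of the sum. Since $\alpha_1, \dotsc, \alpha_r$ are indeterminates, we can work coefficient-by-coefficient in the polynomial ring $\mathbb{C}[\alpha_1, \dotsc, \alpha_r]$, which makes the argument essentially formal.

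First, I would expand by the multinomial formula:
\[
\frac{(\alpha_1 l_1 + \dotsc + \alpha_r l_r)^n}{n!}
=
\sum_{p_1 + \dotsc + p_r = n}
\alpha_1^{p_1} \dotsc \alpha_r^{p_r}
\cdot
\frac{l_1^{p_1}}{p_1!} \dotsc \frac{l_r^{p_r}}{p_r!}.
\]
Summing over $(l_1, \dotsc, l_r) \in \mathbb{N}^r$ with $a_1 l_1 + \dotsc + a_r l_r = m$ and exchanging the two (finite) sums, the left hand side of the target identity becomes a finite $\mathbb{C}$-linear combination, indexed by tuples $(p_1, \dotsc, p_r)$ with $\sum_i p_i = n$, of the sums appearing in Lemma~\ref{lem:asymptoticsriemann}.

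Next, I would apply Lemma~\ref{lem:asymptoticsriemann} termwise. For each fixed tuple with $p_1 + \dotsc + p_r = n$, the corresponding inner sum equals
\[
\frac{\gcd(a_1, \dotsc, a_r)}{a_1^{p_1 + 1} \dotsc a_r^{p_r + 1}}
\cdot
\frac{m^{n + r - 1}}{(n + r - 1)!}
+
o(m^{n + r - 1}),
\]
(note that $p_1 + \dotsc + p_r + r - 1 = n + r - 1$ by assumption). Multiplying by $\alpha_1^{p_1} \dotsc \alpha_r^{p_r}$ and factoring $\frac{\gcd(a_1,\dotsc,a_r)}{a_1 \dotsc a_r}$ out of the main term produces exactly the asserted leading coefficient
\[
\frac{\gcd(a_1, \dotsc, a_r)}{a_1 \dotsc a_r}
\sum_{p_1 + \dotsc + p_r = n}
\left(\frac{\alpha_1}{a_1}\right)^{p_1} \dotsc \left(\frac{\alpha_r}{a_r}\right)^{p_r}
\cdot
\frac{m^{n + r - 1}}{(n + r - 1)!}.
\]

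The only point to check carefully, which I view as the (very mild) main technical issue, is the meaning of the $o(m^{n+r-1})$ error after summation. Since the multinomial sum has only finitely many terms, indexed by $(p_1, \dotsc, p_r)$ with $\sum_i p_i = n$, the aggregated error is a homogeneous polynomial of degree $n$ in $\alpha_1, \dotsc, \alpha_r$ whose coefficients are each $o(m^{n+r-1})$; this is exactly the sense of the error term stated in the lemma. This finishes the proof.
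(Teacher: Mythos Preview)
Your proof is correct and follows essentially the same approach as the paper: expand via the multinomial formula, swap the two finite sums, and apply Lemma~\ref{lem:asymptoticsriemann} term by term. Your treatment is in fact slightly cleaner, since you write the multinomial coefficient directly as $\frac{1}{p_1!\cdots p_r!}$, matching the form in which Lemma~\ref{lem:asymptoticsriemann} is stated.
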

\begin{proof}
	We expand the sum, using the Newton identity, and we apply Lemma~\ref{lem:asymptoticsriemann}: 
	\begin{align*}
		\sum_{a_{1} l_{1} + \dotsc + a_{r} l_{r} = m}
		\frac{(\alpha_{1} l_{1} + \dotsc + \alpha_{r} l_{r})^{n}}{n!} & =
		\sum_{a_{1} l_{1} + \dotsc + a_{r} l_{r} = m}
		\;
		\sum_{p_{1} + \dotsc + p_{r} = n} \binom{n}{p_{1}, \dotsc, p_{r}}
		l_{1}^{p_{1}} \dotsc l_{r}^{p_{r}} \alpha_{1}^{p_{1}} \dotsc \alpha_{r}^{p_{r}} \\
		& =
		\frac{\gcd(a_{1}, \dotsc, a_{r})}{a_{1} \dotsc a_{r}} 
		\left[
			\sum_{p_{1} + \dotsc + p_{r} = n} \left(\frac{\alpha_{1}}{a_{1}}\right)^{p_{1}} \dotsc \left(\frac{\alpha_{1}}{a_{r}}\right)^{p_{r}} \right]
		\frac{m^{n + r -1}}{(n+ r - 1)!}
		+ o(m^{n+r-1}).
	\end{align*}
\end{proof}

\begin{remark} In \cite[Proposition 3.3.8]{Cad16}, the author invoked Toën's orbifold Riemann-Roch theorem to give an asymptotic estimate of a particular case of Lemma~\ref{lem:asymptfraction}, which might seem a bit disproportionate. Let us use these notes to give here a more down to earth argument. We fix \(k, n \in \mathbb{N}\). Identifying \(\alpha_{1} = \dotsc = \alpha_{r} = 1\) in the expression of Lemma~\ref{lem:asymptfraction}, and taking \(r = k\), we get:
	\begin{align*}
		\sum_{l_{1} + 2 l_{2} +  \dotsc + k l_{k} = m} \frac{(l_{1} + \dotsc + l_{k})^{n}}{n!}
		& = \left( 
		\frac{1}{k!} 
		\sum_{p_{1} + \dotsc + p_{k} = n} 
		\frac{1}{1^{p_{1}}} \dotsc \frac{1}{k^{p_{k}}}
		\right) \frac{m^{n+k-1}}{(n+k-1)!} + o(m^{n+k-1}) \\
		& = \frac{1}{k!} \left[ \sum_{1 \leq i_{1} \leq \dotsc \leq i_{n} \leq k} \frac{1}{i_{1} \dotsc i_{n}}\right] \frac{m^{n+k-1}}{(n+k-1)!} + o(m^{n+k-1}).
	\end{align*}
	The formula holds without restriction of divisibility on \(m\) since \(\gcd(1, 2, \dotsc, k) = 1\). This gives back the estimate of \cite{Cad16}.
\end{remark}

\subsection{Asymptotics of Euler characteristics}

In this section, we use Lemma~\ref{lem:asymptfraction} to determine the asymptotic behaviour of the Euler characteristics of symmetric powers of some weighted projective sums. 
\medskip

\begin{proposition}
	Let \(X\) be a complex projective manifold of dimension \(n\). Let \(L_{1}, \dotsc, L_{r}\) be line bundles on \(X\), and fix integers \(a_{1}, \dotsc, a_{r} \in \mathbb{N}\). Then, one has the asymptotic expansion, as \(m\) goes to \(+\infty\) while being divisible by \(\gcd(a_{1}, \dotsc, a_{r})\):
\[
\chi(X, S^{m}(L_{1}^{(a_{1})} \oplus \dotsc \oplus L_{r}^{(a_{r})}))
	 =
	\frac{\gcd(a_{1}, \dotsc, a_{r})}{a_{1} \dotsc a_{r}}
	\int_{X}
	\left(
		\prod_{j = 1}^{r} \sum_{p=0}^{n} \frac{c_{1}(L_{j})^{p}}{a_{j}^{p}}
	\right)_{n}
	\frac{m^{n+r-1}}{(n+r-1)!} + o(m^{n+r-1}).
\]
	where \((\cdot)_{n}\) means we take the part of pure degree \(n\) of the class between the brackets.
\end{proposition}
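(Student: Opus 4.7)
The plan is to reduce to the ordinary line-bundle case via the decomposition of the symmetric algebra, apply Hirzebruch--Riemann--Roch termwise, and control the resulting lattice sum using Lemma~\ref{lem:asymptfraction}. By Definition~\ref{def:symweight}, one has
\[
S^{m}(L_{1}^{(a_{1})} \oplus \dotsc \oplus L_{r}^{(a_{r})}) = \bigoplus_{\substack{a_{1} l_{1} + \dotsc + a_{r} l_{r} = m \\ l_{j} \in \mathbb{N}}} L_{1}^{\otimes l_{1}} \otimes \dotsc \otimes L_{r}^{\otimes l_{r}},
\]
a finite direct sum since each \(l_{j}\) is bounded by \(m/a_{j}\). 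Additivity of the Euler characteristic then yields
\[
\chi(X, S^{m}\mathbf{L}) = \sum_{\sum a_{j} l_{j} = m} \chi\bigl(X, L_{1}^{\otimes l_{1}} \otimes \dotsc \otimes L_{r}^{\otimes l_{r}}\bigr).
\]

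Next, I would apply Hirzebruch--Riemann--Roch to each summand. Writing \(\alpha_{j} := c_{1}(L_{j})\), this produces
\[
\chi\bigl(X, L_{1}^{\otimes l_{1}} \otimes \dotsc \otimes L_{r}^{\otimes l_{r}}\bigr) = \int_{X} \sum_{k=0}^{n} \frac{(l_{1} \alpha_{1} + \dotsc + l_{r} \alpha_{r})^{k}}{k!} \cdot \mathrm{td}_{n-k}(TX).
\]
The \(k = n\) piece is a polynomial of degree exactly \(n\) in \((l_{j})\), while the terms with \(k \leq n-1\) contribute polynomials of degree \(k\) whose rational coefficients depend only on \(X\) and the \(L_{j}\), and are in particular independent of \(m\).

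Summing the top piece over the lattice and invoking Lemma~\ref{lem:asymptfraction} with the indeterminates set to \(\alpha_{j} = c_{1}(L_{j})\) --- interpreted degree by degree in \(H^{*}(X, \mathbb{Q})\), then integrated against \([X]\) --- produces exactly the advertised leading asymptotic
\[
\frac{\gcd(a_{1}, \dotsc, a_{r})}{a_{1} \dotsc a_{r}} \int_{X} \left(\prod_{j = 1}^{r} \sum_{p = 0}^{n} \frac{\alpha_{j}^{p}}{a_{j}^{p}}\right)_{n} \frac{m^{n+r-1}}{(n+r-1)!}.
\]
(Only \(p \leq n\) can contribute to the degree-\(n\) part, which is why truncating the geometric series at \(p = n\) is harmless.) For the remainder, each monomial \(l_{1}^{p_{1}} \dotsc l_{r}^{p_{r}}\) of total degree \(\leq n - 1\) occurring in the lower HRR terms contributes, by Lemma~\ref{lem:asymptoticsriemann}, a lattice sum of order \(O(m^{(\sum p_{j}) + r - 1}) = O(m^{n + r - 2}) = o(m^{n+r-1})\). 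Only finitely many such monomials occur, with \(m\)-independent coefficients, so their total contribution is absorbed in the error.

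There is no serious obstacle here: the argument is essentially mechanical once the combinatorial content has been isolated in Lemmas~\ref{lem:asymptoticsriemann} and~\ref{lem:asymptfraction}. The only mild care required is the clean separation between the top-degree HRR term --- producing the exact leading asymptotic via Lemma~\ref{lem:asymptfraction} --- and the lower-degree HRR terms, which must be verified to fall into the \(o(m^{n+r-1})\) error.
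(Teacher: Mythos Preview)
Your proof is correct and follows essentially the same route as the paper: decompose \(S^{m}\mathbf{L}\) into a direct sum of tensor products of line bundles, apply Hirzebruch--Riemann--Roch termwise, isolate the top-degree piece, and invoke Lemma~\ref{lem:asymptfraction} for the leading asymptotic while the lower-degree Todd contributions are \(o(m^{n+r-1})\). Your write-up is in fact slightly more explicit than the paper's in handling the lower-order terms via Lemma~\ref{lem:asymptoticsriemann}, but the argument is the same.
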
 \label{prop:symmetricsum}
 Let \(\alpha_{i} = c_{1}(L_{i})\) for all \(1 \leq i \leq r\). One has then, using the Hirzebruch-Riemann-Roch theorem:
\begin{align*}
	\chi(X, S^{m}(L_{1}^{(a_{1})} \oplus \dotsc \oplus L_{r}^{(a_{r})}))
	& = \chi(X, \bigoplus_{a_{1} l_{1} + \dotsc + a_{r} l_{r} = m} 
	L_{1}^{\otimes l_{1}} \otimes \dotsc \otimes L_{r}^{\otimes l_{r}}) \\
	& = \sum_{a_{1} l_{1} + \dotsc + a_{r} l_{r} = m} 
	\chi(X, L_{1}^{\otimes l_{1}} \otimes \dotsc \otimes L_{r}^{\otimes l_{r}}) \\
	& = \sum_{a_{1} l_{1} + \dotsc + a_{r} l_{r} = m} 
	\left[
	    \int_{X} \frac{(\alpha_{1} l_{1} + \dotsc + \alpha_{r} l_{r})^{n}}{n!} 
	    +
	    \sum_{j=0}^{n} \int_{X} \beta_{j} \cdot (\alpha_{1} l_{1} + \dotsc + \alpha_{r} l_{r})^{n-j}
	\right].
\end{align*}
where for all \(j = 1, \dotsc, n\), the symbol \(\beta_{j} \in H^{2j}(X)\) denotes a cohomology class depending only on \(X\), but not on \(m\). One can now apply Lemma~\ref{lem:asymptfraction} to obtain the result.

\bibliographystyle{amsalpha}
\bibliography{biblio}

\end{document}